\documentclass{amsart}
\usepackage[utf8]{inputenc}
\usepackage{amsmath}
\usepackage{amssymb}
\usepackage{amsthm}
\usepackage{array}
\usepackage[style=numeric,isbn=false,natbib=true,giveninits]{biblatex}
\usepackage{commath}
\usepackage{enumitem}
\usepackage{hyperref}
\usepackage{leftindex}
\usepackage{thmtools}

\calclayout

\makeatletter
\let\thmtools@orig@addtoreset\@addtoreset
\renewcommand*\@addtoreset[2]{%
  \thmtools@orig@addtoreset{#1}{#2}%
  \expandafter\xdef\csname theH#1\endcsname{%
    \expandafter\noexpand\csname theH#2\endcsname.%
    \noexpand\the\noexpand\value{#1}}%
}
\makeatother

\newcommand{\ZZ}{\mathbb{Z}}
\newcommand{\RR}{\mathbb{R}}
\newcommand{\br}{\mathrm{br}}
\newcommand{\pbr}{\mathrm{pbr}}
\DeclareMathOperator{\proj}{proj}
\DeclareMathOperator{\insn}{ins}
\DeclareMathOperator{\Ht}{ht}
\DeclareMathOperator{\Span}{span}

\newcolumntype{C}{>{$}c<{$}}

\theoremstyle{plain}
\newtheorem{theorem}{Theorem}[section]
\newtheorem{proposition}[theorem]{Proposition}
\newtheorem{lemma}[theorem]{Lemma}
\newtheorem{corollary}[theorem]{Corollary}
\newtheorem{conjecture}[theorem]{Conjecture}

\theoremstyle{definition}
\newtheorem{definition}[theorem]{Definition}
\newtheorem*{lemma*}{Lemma}
\newtheorem*{example}{Example}

\theoremstyle{remark}
\newtheorem{remark}[theorem]{Remark}

\ExplSyntaxOn
\seq_new:N \l_perm_left_seq
\seq_new:N \l_perm_right_seq
\seq_new:N \l_perm_input_seq
\cs_new:Npn \__format_signed:n #1
  {
    \int_compare:nNnTF { #1 } < { 0 }
      { \overline { \int_eval:n { 0 - ( #1 ) } } } 
      { \int_eval:n { #1 } }                   
  }
\NewDocumentCommand{\typeb}{m}
  {
    \seq_clear:N \l_perm_left_seq
    \seq_clear:N \l_perm_right_seq
    \seq_set_from_clist:Nn \l_perm_input_seq { #1 }
    \seq_map_inline:Nn \l_perm_input_seq
      {
        \seq_put_right:Nn \l_perm_right_seq { \__format_signed:n { ##1 } }
        \seq_put_left:Nn \l_perm_left_seq { \__format_signed:n { 0 - ( ##1 ) } }
      }
    \seq_use:Nn \l_perm_left_seq { ~ } \mid \seq_use:Nn \l_perm_right_seq { ~ }
  }
\ExplSyntaxOff

\title{On inversion sets of joins in weak Bruhat order}
\author{Ming Yean Lim}
\date{\today}
\address{Department of Mathematics, University of Michigan, Ann Arbor, MI 48109, USA}
\email{mylim@umich.edu}
\subjclass[2020]{Primary 20F55; Secondary 05E16, 06F15}
\keywords{Coxeter groups, weak order, lattices}
\thanks{The author is partially supported by NSF grant DMS-2401114.}

\begin{document}

\begin{abstract}
    It is well-known that any Coxeter group $W$ is a meet-semilattice with respect to weak order. Furthermore, when $W$ is finite, it is a lattice. In the first part of this paper, we give a recursive algorithm that computes joins in weak order for arbitrary finite $W$. In the second part, we complete the proof of a conjecture of Dyer expressing the inversion set of a join in terms of the union of inversion sets of each factor, under the assumption that $W$ is finite.
\end{abstract}

\maketitle

\section{Introduction}\label{sec:intro}

It is now a well-known fact that every Coxeter group $W$ is a complete meet-semilattice with respect to the weak order \cite[Section 3.2]{bjorner2005combinatorics}. This was first proved by Bj\"orner in the 1980s. When $W$ is finite, it follows that $W$ is a lattice. The proof of this fact is abstract -- it does not lead easily to computation. In 1994, Markowsky gave a recursive algorithm \cite{MR1267688} to compute joins and meets in the weak order of the symmetric group. Very recently, Biagioli--Perrone gave an algorithm \cite{biagioli2026computing} that works for type $B$. In the first part of this paper, we generalize both algorithms to arbitrary finite Coxeter groups. In particular, in the appendix, we give an algorithm that works for type $D$ Coxeter groups, operating on the complete notation of even signed permutations. This answers a problem posed by Biagioli--Perrone \cite{biagioli2026computing}.

In a 2019 paper, Dyer \cite{MR3943754} stated many conjectures related to the weak order of Coxeter groups. Among these is a conjecture expressing the inversion set of a join $u \vee w$ in terms of the inversion sets of the factors $u, w \in W$. To state this conjecture, we need to introduce some notation. Let $\Phi$ denote the set of roots acted on by $W$, and let $\Phi^+$ denote the subset of positive roots with respect to the set of simple reflections $S \subseteq W$. Set $\Phi^- = -\Phi^+$. For $w \in W$, define
\begin{equation*}
    \Phi_w = \Phi^+ \cap w(\Phi^-).
\end{equation*}
Thus, the \emph{(right) weak order} on $W$ is defined by declaring that
\begin{equation*}
    u \le w \quad \text{ if and only if } \quad \Phi_u \subseteq \Phi_w.
\end{equation*}

\begin{conjecture}[Dyer]\label{conj:dyer_finite}
    Let $W$ be a finite Coxeter group. Then for any $u, w \in W$, we have $\Phi_{u \vee w} = \overline{\Phi_u \cup \Phi_w}^\br$.
\end{conjecture}
Here $\overline{A}^\br$ denotes the \emph{Bruhat preclosure} of $A \subseteq \Phi^+$, studied by Dermenjian in \cite{dermenjian2025bruhatpreclosure}. By definition, it consists of those $\alpha \in \Phi^+$ for which there exists $\alpha_1, \ldots, \alpha_m \in A$ such that $s_\alpha = s_{\alpha_1} \ldots s_{\alpha_m}$, and
\begin{equation*}
    0 < \ell(s_{\alpha_1}) < \ell(s_{\alpha_1} s_{\alpha_2}) < \ldots < \ell(s_{\alpha_1} s_{\alpha_2} \ldots s_{\alpha_m}).
\end{equation*}
This is \cite[Conjecture 2.8]{MR3943754} for \emph{finite} Coxeter groups.

There has been some recent progress on \autoref{conj:dyer_finite}. Biagioli--Perrone proved it for types $A$ and $I$ in \cite{biagioli2026conjecture}, and they announced a proof for type $B$ in \cite{biagioli2026computing}. Moreover, they also claim \cite{biagioli2026computing} to have verified it computationally for types $F_4, H_3, H_4$. On the other hand, Dermenjian \cite{dermenjian2025bruhatpreclosure} proved that the reverse inclusion in \autoref{conj:dyer_finite} always holds. Using results in the second part of this paper, we prove
\begin{theorem}\label{thm:dyer_simply_laced}
    Let $W$ be a finite Coxeter group of simply-laced type. Then for any $u, w \in W$, we have $\Phi_{u \vee w} = \overline{\Phi_u \cup \Phi_w}^\br$.
\end{theorem}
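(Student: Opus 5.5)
Since Dermenjian \cite{dermenjian2025bruhatpreclosure} has already shown $\overline{\Phi_u \cup \Phi_w}^\br \subseteq \Phi_{u \vee w}$, only the forward inclusion $\Phi_{u\vee w} \subseteq \overline{\Phi_u \cup \Phi_w}^\br$ needs proof. Set $A = \overline{\Phi_u \cup \Phi_w}^\br$. The plan is to identify $\Phi_{u\vee w}$ with a closure of $\Phi_u \cup \Phi_w$ that can be built root by root, and then check that each root produced is in $A$.

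\textbf{Step 1: a description of the join's inversion set.} For finite $W$ it is standard that $\Phi_{u\vee w}$ is the smallest biclosed set containing $\Phi_u\cup\Phi_w$. In simply-laced type, closedness means: whenever $\beta, \gamma$ lie in the set and $\beta+\gamma$ is a root, then $\beta+\gamma$ lies in the set. Coclosedness then has to be maintained by the iteration. I would use the recursive join algorithm of the first part of the paper to make this explicit. Concretely, I would aim to show that $\Phi_{u\vee w}$ is obtained from $\Phi_u\cup\Phi_w$ by repeatedly adding roots $\beta+\gamma$ with $\beta,\gamma$ already present. If the algorithm instead gives a chain $u \le v_1 \le \dots \le u\vee w$ in which each step adds the new roots of a rank-two parabolic coset, this is the same statement in dihedral form. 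In simply-laced type every rank-two subsystem is of type $A_1\times A_1$ or $A_2$. In $A_2$ the only root added is the sum of the other two.

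\textbf{Step 2: the key lemma.} It suffices to show two things. First, $\Phi_u\cup\Phi_w\subseteq A$, which is trivial with $m=1$. Second, $A$ is closed under the additions of Step 1.

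So suppose $\beta,\gamma\in A$ with $\beta+\gamma=\alpha\in\Phi^+$. In simply-laced type $(\beta,\gamma^\vee)=-1$, so
\begin{equation*}
    s_\alpha = s_\beta s_\gamma s_\beta = s_\gamma s_\beta s_\gamma .
\end{equation*}
We have Bruhat-increasing factorizations $s_\beta = s_{\beta_1}\cdots s_{\beta_p}$ and $s_\gamma = s_{\gamma_1}\cdots s_{\gamma_q}$. The goal is an increasing factorization of $s_\alpha$ by roots of $\Phi_u\cup\Phi_w$.

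The natural attempt is to concatenate and choose whichever of $s_\beta s_\gamma s_\beta$ or $s_\gamma s_\beta s_\gamma$ works. This needs two facts:
\begin{itemize}
    \item Lengths strictly increase, using $\ell(s_\beta)<\ell(s_\alpha)$ and $\ell(s_\gamma)<\ell(s_\alpha)$. These hold because $\alpha$ is a higher root in the same $A_2$, so its reflection is longer.
    \item The partial products keep increasing through the joins between blocks.
\end{itemize}
The second point is the main obstacle. Plain concatenation need not be increasing, and a triple product wastes length. I would therefore work with the inductive structure instead. It should be enough to handle $\beta\in\Phi_u\cup\Phi_w$ directly and $\gamma\in A$ with a given chain. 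Then I would use the Bruhat lifting property (Z-property) to show that one of $s_\gamma s_\beta$ and $s_\beta s_\gamma$ exceeds $s_\gamma$. I would append $\beta$ on the appropriate side, or conjugate, to reach $s_\alpha$.

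\textbf{Step 3: using simply-lacedness and finishing.} I expect the real work to be in Step 2, using the paper's preceding structural results on the preclosure. Where a case analysis is needed, I would reduce via parabolic subgroups to rank-three subsystems of types $A_3$ and $A_1\times A_2$ and verify them directly. Combined with Dermenjian's reverse inclusion, this gives equality.
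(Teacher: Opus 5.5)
The overall shape of your plan is right: show that the closure of $B=\Phi_u\cup\Phi_w$ under root addition lies in $\overline{B}^\br$, then invoke Dermenjian. The gap is that the step carrying the content is never done, and the mechanism you suggest for it fails.

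\textbf{Appending versus conjugating.} Appending $s_\beta$ on one side cannot give $s_\alpha$, since $s_\gamma s_\beta$ and $s_\beta s_\gamma$ are products of two distinct reflections, not reflections. The Z-property only controls a single length comparison. You must conjugate, using the sequence $s_\beta, s_{\gamma_1},\dots,s_{\gamma_q}, s_\beta$. Then every prefix $s_\beta s_{\gamma_1}\cdots s_{\gamma_r}$ must grow, which amounts to $s_{\gamma_1}\cdots s_{\gamma_r}(\gamma_{r+1})\notin\Phi_{s_\beta}$. For an arbitrary length-increasing factorization of $s_\gamma$ this fails.

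For example, in $D_4$ take $\alpha_1=e_1-e_2$, $\alpha_2=e_2-e_3$, $\alpha_3=e_3-e_4$, $\alpha_4=e_3+e_4$. Let $\gamma=e_1-e_4$ and $\beta=e_2+e_4$, so $\beta+\gamma=e_1+e_2$. Then $s_\gamma=s_2\cdot(s_2s_1s_2)\cdot s_3\cdot s_2\cdot s_1$ has prefix lengths $1,2,3,4,5$. But $s_\beta(\alpha_2)=-e_3-e_4$, so already $\ell(s_\beta s_2)<\ell(s_\beta)$.

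\textbf{The missing idea.} Carry \emph{palindromic} factorizations $t_1\cdots t_{l-1}t_lt_{l-1}\cdots t_1$. For these, the prefix condition is equivalent to $t_1\cdots t_r(\alpha_{r+1})\in\Phi_t$ for $0\le r\le l-1$ (\autoref{lem:incr_len_iff}). When $\alpha_1+\beta_2\in\Phi^+$, one has $\Phi_{s_{\alpha_1}}\cap\Phi_{s_{\beta_2}}=\emptyset$ (\autoref{lem:sum_is_root_implies_weak_le}). This is exactly what makes conjugation by $s_{\alpha_1}$ preserve the condition, and it gives $\overline{B}\subseteq\overline{B}^\pbr\subseteq\overline{B}^\br$.

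\textbf{Other inputs asserted rather than proved.}
\begin{itemize}
\item Your reduction to the case $\beta\in B$, $\gamma\in A$ needs the fact that adding one element of $B$ at a time already exhausts the $2$-closure, i.e.\ $\overline{B}=\{\beta\in\Phi^+\mid\beta>_B0\}$ (\autoref{thm:2_closure_simply_laced}). This is not formal. It needs a strong induction on height with a case split on $(\beta',\alpha)\in\{-1,0,1\}$.
\item Step~1 is precisely Dyer's result $\Phi_{u\vee w}=\overline{\Phi_u\cup\Phi_w}$ \cite[Theorem 1.5]{MR3943754}. The nontrivial point is that the $2$-closure is already coclosed, so knowing $\Phi_{u\vee w}$ is the smallest biclosed set containing $B$ is not enough. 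The fiberwise join algorithm does not produce this, so you should cite it rather than try to rederive it.
\item The proposed reduction to rank-three subsystems is unjustified. Bruhat preclosure uses the length function of $W$, and the factorizations involved need not live in any rank-three standard parabolic subgroup.
\end{itemize}
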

In particular, the above result applies to types $A$, $D$, and $E$. For the latter two cases this was not previously known. Consequently, by the classification of finite irreducible Coxeter groups,
\begin{corollary}
    \autoref{conj:dyer_finite} is true.
\end{corollary}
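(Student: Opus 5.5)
The plan is to deduce \autoref{conj:dyer_finite} from the irreducible cases via the classification. There are two steps: reduce to irreducible $W$, then check each irreducible type against a known result.

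\emph{Reduction to the irreducible case.} Suppose $W = W_1 \times W_2$ with $S = S_1 \sqcup S_2$. Then $\Phi^+ = \Phi_1^+ \sqcup \Phi_2^+$, with $W_2$ acting trivially on $\Phi_1$ and vice versa. Writing $u = (u_1,u_2)$ and $w=(w_1,w_2)$, we get $\Phi_u = \Phi_{u_1} \sqcup \Phi_{u_2}$. Consequently weak order is the product order, and $u \vee w = (u_1 \vee w_1, u_2 \vee w_2)$, so
\begin{equation*}
\Phi_{u\vee w} = \Phi_{u_1\vee w_1} \sqcup \Phi_{u_2 \vee w_2}.
\end{equation*}
By Dermenjian's result \cite{dermenjian2025bruhatpreclosure}, $\overline{\Phi_u\cup\Phi_w}^\br \subseteq \Phi_{u\vee w}$ always holds, so only the inclusion $\Phi_{u \vee w} \subseteq \overline{\Phi_u \cup \Phi_w}^\br$ needs proof.

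Assume the irreducible factors satisfy the conjecture. Take $\alpha \in \Phi_{u_1 \vee w_1}$. Then there are $\alpha_1,\dots,\alpha_m \in \Phi_{u_1} \cup \Phi_{w_1}$ with $s_\alpha = s_{\alpha_1}\cdots s_{\alpha_m}$ and the prefix lengths strictly increasing in $W_1$. The length function of $W$ restricts to that of the standard parabolic subgroup $W_1$. Hence the same chain witnesses $\alpha \in \overline{\Phi_u\cup\Phi_w}^\br$ in $W$, and the same argument applies to $W_2$. By induction on the number of irreducible components, it suffices to prove the conjecture for finite irreducible $W$.

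\emph{Irreducible cases.} By the classification, $W$ is of type $A_n$, $B_n$, $D_n$, $E_6$, $E_7$, $E_8$, $F_4$, $H_3$, $H_4$, or $I_2(m)$.
\begin{itemize}
\item Types $A_n$, $D_n$ and $E_{6,7,8}$ are simply laced, so \autoref{thm:dyer_simply_laced} covers them.
\item Types $I_2(m)$ (and $A_n$ again) are covered by Biagioli--Perrone \cite{biagioli2026conjecture}.
\item Type $B_n$ is the case announced by Biagioli--Perrone \cite{biagioli2026computing}.
\item The exceptional types $F_4$, $H_3$, $H_4$ are finitely many finite checks, verified computationally in \cite{biagioli2026computing}.
\end{itemize}

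The main obstacle is type $B_n$, together with the exceptional non-simply-laced types, which rest on external results. If the announced type $B$ proof were unavailable, I would try to redo it with the same machinery used for \autoref{thm:dyer_simply_laced}, or else realize $B_n$ via folding from a simply-laced type. Either route would need care, because Bruhat preclosure is not obviously compatible with folding. For $F_4$, $H_3$, $H_4$, I would rerun the finite computation using the join algorithm from the first part of the paper.
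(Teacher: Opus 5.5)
Your proposal is correct and follows the paper's route. You cover types $A$, $D$, $E$ by \autoref{thm:dyer_simply_laced}, take types $B$, $I_2(m)$, $F_4$, $H_3$, $H_4$ from Biagioli--Perrone, and conclude by the classification; your explicit reduction from reducible $W$ to its irreducible factors (componentwise joins, with lengths and inversion sets restricting to standard parabolic subgroups) is correct and fills in a step the paper leaves implicit.
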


We remark that the original formulation of Dyer's conjecture does not impose any finiteness condition on $W$. It is formulated in terms of biclosed sets, which in the finite case are precisely the inversion sets $\Phi_w$, see \cite{MR3943754} for details. This conjecture remains unresolved in general.

En route to proving \autoref{thm:dyer_simply_laced}, we define the \emph{palindromic Bruhat preclosure} $\overline{A}^\pbr$ of $A \subseteq \Phi^+$ to be the subset of $\overline{A}^\br$ where we further impose the condition that the sequence $\alpha_1, \ldots, \alpha_m$ has to be palindromic. It turns out that palindromic Bruhat preclosure is always a subset of Dyer's \emph{$2$-closure} $\overline{A}$, as defined in \cite[Section 1.4]{MR3943754}:

\begin{theorem}\label{thm:pbr_subseteq_2_closure_root}
    For any $A \subseteq \Phi^+$, we have $A \subseteq \overline{A}^\pbr \subseteq \overline{A}$.
\end{theorem}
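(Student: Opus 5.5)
The first inclusion is immediate: for $\alpha \in A$ the one-term sequence $\alpha_1 = \alpha$ is palindromic, and the only length condition, $0 < \ell(s_\alpha)$, holds. So the work is in $\overline{A}^\pbr \subseteq \overline{A}$. I will use Dyer's $2$-closure in the form: $\overline{A}$ is the smallest subset of $\Phi^+$ containing $A$ such that whenever $\beta, \gamma \in \overline{A}$ and $a\beta + b\gamma \in \Phi^+$ with $a, b \ge 0$, then $a\beta + b\gamma \in \overline{A}$. It therefore suffices to show that a palindromic witness for $\alpha$ can be unwound into a chain of rank-two nonnegative combinations that starts in $A$ and ends at $\alpha$.

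\textbf{Setup.} Let $\alpha_1, \ldots, \alpha_m$ be a palindromic witness for $\alpha$, so $m = 2k-1$ and $\alpha_j = \alpha_{m+1-j}$. Palindromicity gives $s_\alpha = s_{\alpha_1}\, s'\, s_{\alpha_1}$, where $s'$ is the product over the inner palindrome. I will peel the word from the middle outward. Set $\gamma_k = \alpha_k$, and for $j < k$ let $\gamma_j \in \Phi^+$ be the positive root with $\gamma_j = \pm s_{\alpha_j}(\gamma_{j+1})$. By construction $s_{\gamma_j} = s_{\alpha_j} s_{\gamma_{j+1}} s_{\alpha_j}$, so $s_{\gamma_1} = s_\alpha$ and hence $\gamma_1 = \alpha$. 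Writing $w_i = s_{\alpha_1}\cdots s_{\alpha_i}$, one also gets $\gamma_j = \pm w_{j-1}^{-1}(\alpha)$.

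\textbf{Induction.} I will show by downward induction on $j$ that $\gamma_j \in \overline{A}$. The base case is $\gamma_k = \alpha_k \in A$. For the inductive step, $s_{\alpha_j}(\gamma_{j+1}) = \gamma_{j+1} - c_j \alpha_j$ with $c_j = \langle \gamma_{j+1}, \alpha_j^\vee\rangle$. If $c_j \le 0$, then $\gamma_j = \gamma_{j+1} + |c_j|\,\alpha_j$ is a positive root that is a nonnegative combination of $\gamma_{j+1} \in \overline{A}$ and $\alpha_j \in A$, so $\gamma_j \in \overline{A}$ by $2$-closedness. (When $c_j = 0$ we simply have $\gamma_j = \gamma_{j+1}$.) The whole claim therefore reduces to the sign inequality
\begin{equation*}
    \langle \gamma_{j+1}, \alpha_j^\vee \rangle \le 0 \quad \text{for all } j < k .
\end{equation*}

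\textbf{Main obstacle.} The hard step is extracting this inequality from the length conditions $\ell(w_{i-1}) < \ell(w_i)$. The standard criterion is $\ell(x s_\beta) > \ell(x)$ if and only if $x(\beta) \in \Phi^+$, so the hypothesis says exactly that $w_{i-1}(\alpha_i) > 0$ for every $i$.

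My plan is to use the steps in the second half of the word, $i = m+1-j$, where $\alpha_i = \alpha_j$. Palindromicity lets me rewrite the prefix there as $w_{m-j} = s_\alpha\, w_{j}$, or equivalently as $w_{j-1}\, s_{\gamma_j}$ up to reindexing. The positivity condition then becomes a sign condition on $s_{\alpha_j} s_{\gamma_{j+1}}(\alpha_j)$ transported by $w_{j-1}$. I will compare it with the first-half condition $w_{j-1}(\alpha_j) > 0$ and with the relation between $\gamma_j$ and $w_{j-1}^{-1}(\alpha)$. The expected outcome is that $c_j > 0$ would force one of these roots to be negative, contradicting an increase in length.

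If this direct computation becomes too tangled, my fallback is to pass to the rank-two reflection subgroup generated by $s_{\alpha_j}$ and $s_{\gamma_{j+1}}$. There the sign of the pairing corresponds to whether $\{\alpha_j, \gamma_{j+1}\}$ are the canonical simple roots of that dihedral subsystem, and the length-monotonicity along the symmetric portion of the word would be used to rule out the obtuse-angle failure.
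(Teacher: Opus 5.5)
\textbf{Your framework matches the paper's proof, but its central step is not proved.} Your $\gamma_j$ is the paper's $t_{j-1}\cdots t_1(\alpha)$, and the downward induction through $2$-closedness is the same one the paper uses. However, the only nontrivial step is the inequality $\langle \gamma_{j+1}, \alpha_j^\vee \rangle \le 0$. You leave it as an ``expected outcome'' with a vague dihedral fallback, so the proposal does not yet prove the theorem.

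There is also a sign problem in your setup. The length conditions give information about $w_{j-1}^{-1}(\alpha)$, but you only know $\gamma_j = \pm w_{j-1}^{-1}(\alpha)$. With the wrong sign, the inequality you need is reversed. So the sign must be pinned down by a downward induction anchored at the middle letter.

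\textbf{How to close the gap, using the ingredients you already named.} Put $\beta_j = w_{j-1}(\alpha_j)$ and $\delta_j = w_{j-1}^{-1}(\alpha)$ for $1 \le j \le k$.
\begin{enumerate}[label=(\arabic*)]
    \item The length condition at position $j$ gives $\beta_j \in \Phi^+$.
    \item At position $m+1-j$, use $w_{m-j} = s_\alpha w_j$ and $w_j(\alpha_j) = -\beta_j$; the condition becomes $s_\alpha(\beta_j) \in \Phi^-$.
    \item Since $s_\alpha(\beta_j) = \beta_j - \langle \beta_j, \alpha^\vee \rangle \alpha$ with $\beta_j, \alpha \in \Phi^+$, this forces $\langle \beta_j, \alpha^\vee \rangle > 0$. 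By $W$-invariance this says $\langle \delta_j, \alpha_j^\vee \rangle > 0$.
    \item Because $\delta_{j+1} = s_{\alpha_j}(\delta_j)$, we get $\delta_j = \delta_{j+1} + \langle \delta_j, \alpha_j^\vee \rangle \alpha_j$ with a positive coefficient.
    \item For the anchor: $s_{w_{k-1}(\alpha_k)} = s_\alpha$ and the middle condition $w_{k-1}(\alpha_k) \in \Phi^+$ give $w_{k-1}(\alpha_k) = \alpha$, that is, $\delta_k = \alpha_k$.
    \item Downward induction from (4) and (5) gives $\delta_j \in \Phi^+$ for all $j$. Hence $\gamma_j = \delta_j$, and $c_j = \langle s_{\alpha_j}\delta_j, \alpha_j^\vee \rangle = -\langle \delta_j, \alpha_j^\vee \rangle < 0$.
\end{enumerate}

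This is precisely the content of the paper's Lemmas~\ref{lem:incr_len_iff} and~\ref{lem:suff_prod}. With it inserted, your induction becomes the paper's proof, and the rank-two fallback is not needed.
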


Perhaps surprisingly, for simply-laced types, the palindromic Bruhat preclosure is precisely equal to the $2$-closure. This gives a generalization of \cite[Theorem 4.4]{biagioli2026conjecture}:
\begin{theorem}\label{thm:simply_laced_pbr_eq_2_closure}
    Let $W$ be a finite Coxeter group of simply-laced type. Then for any $A \subseteq \Phi^+$, we have $\overline{A}^\pbr = \overline{A}$.
\end{theorem}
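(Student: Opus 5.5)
The plan is to prove the missing inclusion $\overline{A} \subseteq \overline{A}^\pbr$; the other inclusion is \autoref{thm:pbr_subseteq_2_closure_root}. Recall that Dyer's $2$-closure $\overline{A}$ is the smallest set containing $A$ that is closed under the rule: if $\alpha,\beta$ lie in the set and $\gamma = a\alpha + b\beta \in \Phi^+$ with $a,b \ge 0$, then $\gamma$ lies in the set. Since $A \subseteq \overline{A}^\pbr$, it suffices to show that $\overline{A}^\pbr$ is itself $2$-closed. Then minimality of $\overline{A}$ gives the inclusion, and no idempotence of the preclosure is needed.

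First I reduce to a single configuration using simply-lacedness. Take $\alpha,\beta \in \overline{A}^\pbr$ and a positive root $\gamma = a\alpha+b\beta$ that is neither $\alpha$ nor $\beta$. Then $\Phi \cap \Span(\alpha,\beta)$ is a rank-two root system, which in simply-laced type is of type $A_1\times A_1$ or $A_2$. The first case has no such $\gamma$. In the second case, with $\alpha,\beta$ positive, $\gamma$ must be $\alpha+\beta$ with $(\alpha,\beta)=-1$ (in the normalization $(\alpha,\alpha)=2$). Hence
\begin{equation*}
  s_\gamma = s_\alpha s_\beta s_\alpha = s_\beta s_\alpha s_\beta .
\end{equation*}
Let $\alpha_1,\dots,\alpha_m$ and $\beta_1,\dots,\beta_k$ be palindromic sequences in $A$ witnessing $\alpha,\beta \in \overline{A}^\pbr$. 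Consider the concatenations
\begin{equation*}
  \alpha_1,\dots,\alpha_m,\beta_1,\dots,\beta_k,\alpha_1,\dots,\alpha_m
\end{equation*}
and the analogous one with the roles of $\alpha$ and $\beta$ swapped. Each is palindromic, because its three blocks are palindromic and the outer blocks agree. Each also multiplies to $s_\gamma$. So only the strict length-increase condition along prefixes remains.

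For the length condition I use the standard criterion $\ell(x s_\delta) > \ell(x)$ if and only if $x(\delta) \in \Phi^+$. The first block is fine by hypothesis. For a prefix $x = s_\alpha s_{\beta_1}\cdots s_{\beta_{j-1}}$ I need $x(\beta_j) > 0$. Here I would compare with the known facts $s_{\beta_1}\cdots s_{\beta_{j-1}}(\beta_j) > 0$ and ask when applying $s_\alpha$ preserves positivity. Since $s_\alpha$ only makes $\alpha$ negative, it suffices to ensure that the relevant roots are never $\alpha$. For the third block, using $s_\alpha s_\beta s_\alpha = s_\gamma$, the prefixes are $s_\gamma s_{\alpha_m}\cdots s_{\alpha_{i}}$, read via palindromicity. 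Positivity there reduces to $s_\gamma$ sending certain roots in the span of the $\alpha$-witness to positive roots, and $s_\gamma$ only negates roots in $\Phi_{s_\gamma}$.

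The main obstacle is exactly this positivity check: the naive concatenation can fail to increase length at the junctions. My first attempt would be to exploit the freedom to choose between the two palindromic expressions $s_\alpha s_\beta s_\alpha$ and $s_\beta s_\alpha s_\beta$, taking the outer factor to be the root of smaller height (equivalently, smaller $\ell(s_\cdot)$). I would then prove a dihedral lemma: if $(\alpha,\beta)=-1$ and $\ell(s_\alpha) < \ell(s_\beta)$, then $s_\alpha < s_\alpha s_\beta < s_\alpha s_\beta s_\alpha$ in Bruhat order, with lengths increasing, and the same holds through every intermediate prefix. This lemma would be proved using the reflection-order/Bruhat-graph characterization of such chains, restricted to the reflection subgroup generated by $s_\alpha,s_\beta$. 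If the direct argument stalls, the fallback is induction on $m+k$: peel off the outer letters $\alpha_1=\alpha_m$ by conjugating the whole configuration by $s_{\alpha_1}$, which preserves the relation $\gamma=\alpha+\beta$ and reduces the witness lengths.
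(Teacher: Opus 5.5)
There is a genuine gap, and it sits exactly at the step you flag as the main obstacle. Your reduction to showing that $\overline{A}^\pbr$ is $2$-closed is sound. However, the concatenation (witness of $\alpha$, witness of $\beta$, witness of $\alpha$) is \emph{never} prefix length-increasing when the outer witness has $m\ge 3$ letters, whatever the heights and whichever witnesses you choose.

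Let $s_{\alpha_1}$ be the first (and last) letter of the outer witness. Then $\alpha_1\neq\alpha$, since otherwise the first prefix already has the final length $\ell(s_\alpha)$. The last step of that witness forces $s_\alpha(\alpha_1)<0$, so $(\alpha_1,\alpha)=1$ and $\alpha-\alpha_1\in\Phi^+$. Put $c=(\alpha_1,\beta)$. We have $c\neq 2$ because $\alpha_1\ne\beta$ (as $(\beta,\alpha)=-1$). We also have $c\neq 1$, since $c=1$ gives $(\alpha_1,\alpha+\beta)=2$, i.e.\ $\alpha_1=\alpha+\beta$, which is impossible by height. So $c\in\{0,-1\}$.
\begin{itemize}
\item If $c=0$, the first letter of the third block fails: $s_\alpha s_\beta(\alpha_1)=s_\alpha(\alpha_1)=\alpha_1-\alpha<0$.
\item If $c=-1$, the last letter fails. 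The prefix before it is $s_\alpha s_\beta s_\alpha s_{\alpha_1}=s_\gamma s_{\alpha_1}$, and $s_\gamma s_{\alpha_1}(\alpha_1)=-s_\gamma(\alpha_1)=-\alpha_1<0$ because $(\alpha_1,\gamma)=0$.
\end{itemize}
For a concrete case, take type $A_4$ with $A=\Delta=\{\sigma_1,\dots,\sigma_4\}$, $\alpha=\sigma_1+\sigma_2$ and $\beta=\sigma_3+\sigma_4$. Every candidate word has $9$ letters, but $\ell(s_{\alpha+\beta})=7$, and a length-increasing word of $N$ letters has product of length at least $N$.

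So putting the lower root outside does not help, and your dihedral lemma is false in its prefix-by-prefix form. Only the three-term chain $s_\alpha<s_\alpha s_\beta<s_\gamma$ holds, and it holds in either order. Also, $s_\alpha$ does not ``only make $\alpha$ negative'': $\Phi_{s_\alpha}$ has $\ell(s_\alpha)$ elements, including every $\delta\in\Phi^+$ with $\alpha-\delta\in\Phi^+$. The middle block is nonetheless fine, but the reason is that $\Phi_{s_\alpha}\cap\Phi_{s_\beta}=\emptyset$ (\autoref{lem:sum_is_root_implies_weak_le}).

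The computation shows that a sandwich works only when the outer factor is a single element of $A$. The missing idea is to reduce to that case, and this is what the paper does. It first proves \autoref{thm:2_closure_simply_laced}: $\overline{A}=\{\beta\in\Phi^+\mid\beta>_A 0\}$. In words, every element of the $2$-closure is reached from $0$ by adding elements of $A$ one at a time through positive roots. The key content is the rank-two case split on $(\beta',\alpha)\in\{-1,0,1\}$, with strong induction on the height of the first summand only. The paper then builds witnesses recursively as $s_{\alpha_1}\,(\text{witness of }\beta-\alpha_1)\,s_{\alpha_1}$ with $\alpha_1\in A$.

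Your fallback points in this direction, but as stated it does not run:
\begin{itemize}
\item When $c=-1$, conjugating by $s_{\alpha_1}$ fixes $\gamma$ and replaces $\beta$ by $\alpha_1+\beta$, whose witness is longer. So $m+k$ does not decrease.
\item You must therefore induct on the first summand alone and let the second summand grow. Use $\gamma=(\alpha-\alpha_1)+(\alpha_1+\beta)$ when $c=-1$, and $\gamma=\alpha_1+\bigl((\alpha-\alpha_1)+\beta\bigr)$ when $c=0$.
\item You would also have to prove two things. First, that the single-letter sandwich is length-increasing. Second, that deleting the outer letters of a witness for $\alpha$ leaves a witness for $\alpha-\alpha_1$. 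Together these are the content of \autoref{lem:suff_prod} and the lemma proving $\overline{A}\subseteq\overline{A}^\pbr$.
\end{itemize}
Once fixed in this way, your argument essentially becomes the paper's.
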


\autoref{thm:dyer_simply_laced} then follows easily from the above theorem and previous results of Dyer and Dermenjian:
\begin{proof}[Proof of \autoref{thm:dyer_simply_laced}]
    By \cite[Theorem 1.5]{MR3943754} and \autoref{thm:simply_laced_pbr_eq_2_closure},
    \begin{equation*}
        \Phi_{u \vee w} = \overline{\Phi_u \cup \Phi_w} = \overline{\Phi_u \cup \Phi_w}^\pbr \subseteq \overline{\Phi_u \cup \Phi_w}^\br.
    \end{equation*}
    The reverse inclusion follows from \cite[Lemma 6.9]{dermenjian2025bruhatpreclosure}.
\end{proof}

\section{Computing joins in weak order}

In this section, we describe an algorithm to compute joins in the weak order of finite Coxeter groups $W$. This algorithm is recursive on parabolic subgroups of $W$, based on the following result proven in \cite[Proposition 6.3]{MR2209128} and \cite[Section 4]{MR2149070}:

\begin{theorem}\label{thm:proj_is_lattice_hom}
    Let $(W, S)$ be a finite Coxeter system and let $I \subseteq S$. Then the projection map $\proj^S_I : W \to W_I$ given by $w \mapsto w_I$ preserves meets and joins.
\end{theorem}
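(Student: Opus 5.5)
We prove that for $u, w \in W$ we have $(u \vee w)_I = u_I \vee w_I$ and $(u \wedge w)_I = u_I \wedge w_I$, where $w_I$ denotes the $W_I$-component of the parabolic factorization $w = w_I \cdot {}^I w$ with $w_I \in W_I$ and ${}^I w$ a minimal-length representative of the coset $W_I w$. We work with the right weak order, in which $u \le w$ means $\Phi_u \subseteq \Phi_w$, and with the standard convention $\Phi_w = \{\beta \in \Phi^+ : w^{-1}\beta \in \Phi^-\}$. This convention makes $\ell(w) = |\Phi_w|$ and $\Phi_{uv} = \Phi_u \sqcup u\Phi_v$ whenever $\ell(uv) = \ell(u) + \ell(v)$.

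\textbf{Step 1: the key identity} $\Phi_{w_I} = \Phi_w \cap \Phi_I^+$. Writing $w = w_I \cdot {}^I w$, which is a length-additive factorization, we get $\Phi_w = \Phi_{w_I} \sqcup w_I \Phi_{{}^I w}$. Two facts then give the identity.
\begin{itemize}
\item $\Phi_{w_I} \subseteq \Phi_I^+$, because $w_I \in W_I$ can only make roots of $\Phi_I$ change sign.
\item $w_I \Phi_{{}^I w}$ avoids $\Phi_I$. Since ${}^I w$ is minimal in $W_I({}^I w)$, it sends no positive root to $\Phi_I^-$, so $\Phi_{{}^I w} \cap \Phi_I = \emptyset$. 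As $w_I$ stabilizes $\Phi_I$, applying $w_I$ preserves this.
\end{itemize}

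\textbf{Step 2: order preservation and a converse.} By Step 1, $u \le w$ implies $u_I \le w_I$, so $\proj^S_I$ is order-preserving. We also need the converse direction for elements of $W_I$: if $x \in W_I$ and $x \le w$, then $x \le w_I$. Indeed $\Phi_x \subseteq \Phi_w \cap \Phi_I^+ = \Phi_{w_I}$.

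\textbf{Step 3: meets.} Set $m = u \wedge w$. Order preservation gives $m_I \le u_I \wedge w_I$. For the reverse inequality, let $x = u_I \wedge w_I \in W_I$. Since $u_I \le u$ in weak order (it is a length-additive prefix of $u$), we have $x \le u$, and likewise $x \le w$, hence $x \le m$. By Step 2, $x \le m_I$.

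\textbf{Step 4: joins (the main obstacle).} Set $j = u \vee w$. Order preservation gives $u_I \vee w_I \le j_I$. The reverse inequality $j_I \le u_I \vee w_I$ is the hard part, because nothing so far forces roots of $\Phi_I$ in $\Phi_j$ to come from $\Phi_u \cup \Phi_w$. I would try two routes.
\begin{itemize}
\item \emph{Antiautomorphism.} Use $w \mapsto w w_0$, which reverses the weak order and complements inversion sets. Its effect on projections is not obviously compatible with taking $w_I$, so this may not close the argument directly.
\item \emph{Dyer's $2$-closure.} Use that $\Phi_j$ is the $2$-closure of $\Phi_u \cup \Phi_w$ (\cite[Theorem 1.5]{MR3943754}). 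Then observe that $\Phi_I^+$ is closed in $\Phi^+$ and that $2$-closure commutes with intersecting with $\Phi_I$. The reason is that the rank-two subsystems relevant inside $\Phi_I$ lie in $\Phi_I$, and $\Phi_I \cap \Span(\alpha,\beta)$ for $\alpha,\beta \in \Phi_I$ is the full dihedral subsystem. This gives $\Phi_j \cap \Phi_I = \overline{(\Phi_u \cup \Phi_w)\cap \Phi_I}$, computed within $\Phi_I$. By Step 1 this equals $\overline{\Phi_{u_I} \cup \Phi_{w_I}} = \Phi_{u_I \vee w_I}$, applying Dyer's theorem again in $W_I$.
\end{itemize}
The delicate point is the commutation claim. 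Iterating the $2$-closure outside $\Phi_I$ can produce roots that later contribute to roots inside $\Phi_I$. To handle this, I would show that the set $(\Phi_j \setminus \Phi_I) \cup \Phi_{u_I \vee w_I}$ is itself $2$-closed, so that it contains $\Phi_j$.
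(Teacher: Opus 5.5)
Your Steps 1--3 are the paper's argument. Step 1 is the lemma $\Phi_{\proj^S_I(w)} = \Phi_w \cap \Phi_I^+$, Step 2 is the Galois connection $(\iota^S_I, \proj^S_I)$, and Step 3 is the standard proof that a right adjoint preserves meets.

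For joins you take a different and much heavier route, and as written it is only a plan. It does go through, and the missing check on $B = (\Phi_j \setminus \Phi_I^+) \cup \Phi_{u_I \vee w_I}$ rests on one observation. A positive root outside $\Phi_I^+$ has a positive coefficient on some simple root outside $\Delta_I$. Hence any nonnegative combination that puts positive weight on it also lies outside $\Phi_I^+$. Now take $\alpha, \beta \in B$ and $\gamma = a\alpha + b\beta \in \Phi^+$ with $a, b > 0$; a zero coefficient is trivial because the root system is reduced.
\begin{itemize}
\item If $\alpha$ or $\beta$ lies outside $\Phi_I^+$, then $\gamma \notin \Phi_I^+$. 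Also $\gamma \in \Phi_j$, because $\Phi_j$ is $2$-closed and $\Phi_{u_I \vee w_I} \subseteq \Phi_{j_I} \subseteq \Phi_j$ by your order-preservation inequality. So $\gamma \in B$.
\item Otherwise both roots lie in $\Phi_{u_I \vee w_I}$, which is $2$-closed, like every inversion set.
\end{itemize}
The same observation shows your ``delicate point'' is not an issue: roots outside $\Phi_I^+$ never feed into roots inside $\Phi_I^+$. So $\overline{A} \cap \Phi_I^+ = \overline{A \cap \Phi_I^+}$ holds outright, since $\overline{A \cap \Phi_I^+} \cup (\Phi^+ \setminus \Phi_I^+)$ is $2$-closed and contains $A$. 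Your rank-two justification only addressed the easy inclusion $\overline{A \cap \Phi_I^+} \subseteq \overline{A} \cap \Phi_I^+$.

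The paper avoids Dyer's Theorem 1.5 entirely by exhibiting a right adjoint to $\proj^S_I$. The map $\insn^S_I(x) = x\, w_0(I)\, w_0$ satisfies $\Phi_{\insn^S_I(x)} = \Phi_x \cup (\Phi^+ \setminus \Phi_I^+)$. Hence $u, w \le \insn^S_I(u_I \vee w_I)$, so $j \le \insn^S_I(u_I \vee w_I)$, so $\Phi_{j_I} \subseteq \Phi_{u_I \vee w_I}$. Your $B$ is contained in $\Phi_{\insn^S_I(u_I \vee w_I)}$. Enlarging $B$ to this set, which is already an inversion set, makes the $2$-closure machinery unnecessary.

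Your abandoned first route also works, and it is the paper's proof in dual form. Step 1 gives $\Phi_{(x w_0)_I} = \Phi_I^+ \setminus \Phi_{x_I} = \Phi_{x_I w_0(I)}$, so $(x w_0)_I = x_I\, w_0(I)$. Combining this with $u \vee w = (u w_0 \wedge w w_0)\, w_0$ and Step 3 gives $j_I = (u_I w_0(I) \wedge w_I w_0(I))\, w_0(I) = u_I \vee w_I$.

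So the elementary route needs only the existence of $w_0$ and $w_0(I)$. Your second route imports a deep theorem on $2$-closures to prove a statement that does not need it.
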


For the basics of Coxeter groups, we refer the reader to \cite{bjorner2005combinatorics, humphreys1990reflection}. In subsections \ref{subsec:parabolic} and \ref{subsec:poset}, we recall some notions related to parabolic subgroups and posets to clarify the statement of the above theorem, which we give a short proof of in \autoref{subsec:ins_proj}.

The relevance of the theorem is the following: Let $L, L'$ be join semi-lattices and suppose that we have a join-preserving function $f : L \to L'$. Suppose that we know how to compute both joins in $L'$ and fibers of the map $f$. Then for $x, y \in L$, we have $x \vee y \in f^{-1}(\{f(x) \vee f(y)\})$. In fact, $x \vee y$ is the minimal element $z \in f^{-1}(\{f(x) \vee f(y)\})$ such that $x \le z$ and $y \le z$.

In \autoref{subsec:fibers}, we analyze the fibers of $\proj^S_I$, then give the general algorithm in \autoref{subsec:algorithm}.

\subsection{Parabolic subgroups}\label{subsec:parabolic}

Let $\Delta \subseteq \Phi^+$ denote the set of simple roots corresponding to $S$. For a subset $I \subseteq S$, let $\Delta_I \subseteq \Delta$ denote the corresponding subset of simple roots. Let $\Phi_I = \Phi \cap \Span \Delta_I$, which is a root system with base $\Delta_I$, and let $\Phi_I^+$ denote its positive roots, and $\Phi_I^- = -\Phi_I^+$. Let $W_I$ denote the standard parabolic subgroup of $W$ generated by $I$. If $W_I$ is finite, it has a unique longest element $w_0(I)$. We also set $w_0 = w_0(S)$ if it exists.

Let $\leftindex^I W = \{w \in W \mid \forall s \in I, \ell(s w) > \ell(w) \}$. For $w \in W$, there are unique $w_I \in W_I$ and $\leftindex^I w \in \leftindex^I W$, such that $w = w_I \leftindex^I w$. In this case, $\ell(w) = \ell(w_I) + \ell(\leftindex^I w)$.

\begin{remark}
    The parabolic projection we use is the ``mirrored'' version as in \cite[Equation 2.12]{bjorner2005combinatorics}. As we shall see, this is the variant that behaves well with respect to right weak order.
\end{remark}

\begin{remark}
    For $w \in W_I$, we could have also defined $(\Phi_I)_w = \Phi_I^+ \cap w(\Phi_I^-)$. This is unnecessary, as we have $(\Phi_I)_w = \Phi_w$. Indeed, it is clear that $(\Phi_I)_w \subseteq \Phi_w$, and both sets have the same cardinality $\ell(w)$.
\end{remark}

\subsection{Posets}\label{subsec:poset}

Let $P, Q$ be posets. Recall that a function $f : P \to Q$ is \emph{monotonic} if $u \le w$ implies $f(u) \le f(w)$. We say that $f$ \emph{preserves meets} if $f(u \wedge w) = f(u) \wedge f(w)$ for all $u, w \in P$ whenever the meet $u \wedge w$ exists. Similarly, $f$ \emph{preserves joins} if $f(u \vee w) = f(u) \vee f(w)$ for all $u, w \in P$ whenever the join $u \vee w$ exists.

\begin{definition}
    Let $P, Q$ be posets, and let $L : P \to Q$ and $R : Q \to P$ be two monotonic functions. We call the pair $(L, R)$ a \emph{Galois connection} if for all $p \in P$ and $q \in Q$,
    \begin{equation*}
        L(p) \le q \quad\text{ if and only if }\quad p \le R(q).
    \end{equation*}
    In this case, $L$ is \emph{left adjoint} to $R$, and $R$ is \emph{right adjoint} to $L$.
\end{definition}

The following lemma gives a useful criterion to show that a given function preserves meets or joins:
\begin{lemma}\label{lem:adjoints_preserve_limits}
    Let $P, Q$ be posets, and let $(L, R)$ be a Galois connection as above. Then $L$ preserves joins, and $R$ preserves meets.
\end{lemma}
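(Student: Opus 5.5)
The argument is the standard one for Galois connections. I will prove that $L$ preserves joins; the statement for $R$ then follows by duality.

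\emph{Step 1.} Suppose $p_1, p_2 \in P$ and the join $p_1 \vee p_2$ exists in $P$. I want to show that $L(p_1 \vee p_2)$ is the least upper bound of $\{L(p_1), L(p_2)\}$ in $Q$. Note that this includes showing that the join $L(p_1) \vee L(p_2)$ exists in $Q$. Monotonicity of $L$ together with $p_i \le p_1 \vee p_2$ gives $L(p_i) \le L(p_1 \vee p_2)$, so $L(p_1 \vee p_2)$ is an upper bound.

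\emph{Step 2.} Now let $q \in Q$ be any upper bound, so $L(p_1) \le q$ and $L(p_2) \le q$. By the adjunction, $p_1 \le R(q)$ and $p_2 \le R(q)$. Hence $p_1 \vee p_2 \le R(q)$ by the defining property of the join in $P$. Applying the adjunction in the other direction gives $L(p_1 \vee p_2) \le q$. Thus $L(p_1 \vee p_2)$ is the least upper bound, i.e.\ $L(p_1) \vee L(p_2)$ exists and equals $L(p_1 \vee p_2)$.

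\emph{Step 3 (duality).} Reversing the orders on both $P$ and $Q$ turns $(L,R)$ into a Galois connection $(R, L)$ between $Q^{\mathrm{op}}$ and $P^{\mathrm{op}}$. Indeed, $R(q) \ge p$ iff $q \ge L(p)$. Joins in the opposite posets are meets in the originals, so Steps 1--2 applied to $R$ show that $R$ preserves meets. Alternatively, one can redo the argument directly: $R(q_1 \wedge q_2)$ is a lower bound by monotonicity, and any lower bound $p$ of $R(q_1), R(q_2)$ satisfies $L(p) \le q_1, q_2$, hence $L(p) \le q_1 \wedge q_2$, hence $p \le R(q_1 \wedge q_2)$.

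There is no real obstacle. The only point requiring care is the convention: ``preserves joins'' must be read as saying that $f(u) \vee f(w)$ exists whenever $u \vee w$ does, and Step 2 supplies exactly this existence.
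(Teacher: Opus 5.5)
Your proof is correct, but it argues directly where the paper only cites a general theorem. The paper regards $P$ and $Q$ as thin categories. In that setting a Galois connection is an adjunction $L \dashv R$, and binary joins and meets are coproducts and products. It then cites the general fact that left adjoints preserve colimits and right adjoints preserve limits (Mac Lane, Section IV.5).

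Your Steps 1--2 are exactly what that categorical proof reduces to when every hom-set has at most one element. Your Step 3 is the same formal duality that turns the colimit statement into the limit statement.

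The citation buys brevity and covers arbitrary joins and meets at once. Your version is self-contained and just as general, since nothing in Step 2 uses that the join is binary. It also makes explicit a point that is implicit both in the paper's definition of ``preserves joins'' and in Mac Lane's notion of preserving a colimit: $L(p_1) \vee L(p_2)$ exists in $Q$ whenever $p_1 \vee p_2$ exists in $P$. In the paper's application, $P$ and $Q$ are the finite lattices $W$ and $W_I$, so this existence issue never actually arises there.
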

\begin{proof}
    This is a special case of the preservation of limits by adjoints \cite[Section IV.5]{mac1971categories}.
\end{proof}

\subsection{Parabolic insertion and projection}\label{subsec:ins_proj}

We let $(W, S)$ denote a finite Coxeter system. The finiteness hypothesis is not necessary for some of the statements that follow, but we include it for simplicity. Let $I \subseteq S$. We let $\iota^S_I : W_I \to W$ denote the inclusion map. Define the \emph{projection map} $\proj^S_I : W \to W_I$ by
\begin{equation*}
   \proj^S_I(w) = w_I.
\end{equation*}
We also define the \emph{insertion map} $\insn^S_I : W_I \to W$ by
\begin{equation*}
   \insn^S_I(w) = w w_0(I) w_0.
\end{equation*}

\begin{lemma}\label{lem:Phi_insertion}
    For $w \in W_I$, we have $\Phi_{\insn^S_I(w)} = \Phi_w \cup (\Phi^+ \setminus \Phi_I^+)$.
\end{lemma}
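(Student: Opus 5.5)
We compute $\Phi_{w w_0(I) w_0}$ directly from the definition $\Phi_v = \Phi^+ \cap v(\Phi^-)$, peeling off the three factors one at a time. Two standard facts will be used throughout.
\begin{enumerate}
\item[(a)] $w_0(\Phi^-) = \Phi^+$, and likewise $w_0(I)$ maps $\Phi_I^+$ onto $\Phi_I^-$.
\item[(b)] Any $x \in W_I$ permutes $\Phi^+ \setminus \Phi_I^+$.
\end{enumerate}
Fact (b) holds because a positive root outside $\Phi_I$ has a positive coefficient on some simple root not in $\Delta_I$. Applying a simple reflection $s \in I$ changes only the coefficient on the corresponding simple root in $\Delta_I$. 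So that positive coefficient survives, the image is again positive and not in $\Phi_I$, and we induct on a word for $x$.

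By (a), $\Phi_{\insn^S_I(w)} = \Phi^+ \cap w w_0(I)(\Phi^+)$. Split $\Phi^+ = \Phi_I^+ \sqcup (\Phi^+ \setminus \Phi_I^+)$.
\begin{itemize}
\item By (b), $w w_0(I)$ maps the second piece bijectively onto itself.
\item By (a), $w_0(I)$ maps $\Phi_I^+$ onto $\Phi_I^-$, so $w w_0(I)(\Phi_I^+) = w(\Phi_I^-)$. This lies in $\Phi_I$, because $w \in W_I$ preserves $\Span \Delta_I$.
\end{itemize}
Hence
\begin{equation*}
    w w_0(I)(\Phi^+) = w(\Phi_I^-) \sqcup (\Phi^+ \setminus \Phi_I^+).
\end{equation*}
Intersecting with $\Phi^+$ gives
\begin{equation*}
    \Phi_{\insn^S_I(w)} = \bigl(\Phi_I^+ \cap w(\Phi_I^-)\bigr) \cup (\Phi^+ \setminus \Phi_I^+).
\end{equation*}
Here we used that $w(\Phi_I^-) \subseteq \Phi_I$, so $\Phi^+ \cap w(\Phi_I^-) = \Phi_I^+ \cap w(\Phi_I^-)$.

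By the second remark in \autoref{subsec:parabolic}, the first term is $(\Phi_I)_w = \Phi_w$, which finishes the proof.

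The only step needing care is fact (b). If one prefers to avoid the coefficient argument, the alternative is a counting argument: every positive root made negative by $x \in W_I$ lies in $\Phi_x = (\Phi_I)_x \subseteq \Phi_I^+$. So $x$ keeps $\Phi^+ \setminus \Phi_I^+$ positive, and since $W_I$ preserves $\Phi_I$, it keeps the image outside $\Phi_I$ as well. The remaining steps are routine set manipulations.
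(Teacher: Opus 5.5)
Your proof is correct. It follows the paper's computation in its first step and takes a different route in the second. The paper works with inversion sets throughout and uses the complement identity $\Phi_{vw_0} = \Phi^+ \setminus \Phi_v$ twice. The first use, in $W$, gives $\Phi^+ \setminus \Phi_{w w_0(I)}$. The second use is inside $W_I$, combined with the remark $(\Phi_I)_v = \Phi_v$, and gives $\Phi_{w w_0(I)} = \Phi_I^+ \setminus \Phi_w$. After that only a set identity is left.

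Your first step, writing $\Phi^+ \cap w w_0(I)(\Phi^+)$, is the paper's first complementation. You replace the second complementation by a direct computation of $w w_0(I)(\Phi^+)$, and its key input is your fact (b), that $W_I$ permutes $\Phi^+ \setminus \Phi_I^+$.

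The trade-off is this. The paper's version is a one-liner once the standard complement identity is granted. Yours is more self-contained. In fact (b) makes your final appeal to the remark unnecessary. Splitting $\Phi^-$ as $\Phi_I^- \sqcup -(\Phi^+ \setminus \Phi_I^+)$ gives $\Phi_w = \Phi^+ \cap w(\Phi^-) = \Phi_I^+ \cap w(\Phi_I^-)$ directly. This avoids the length comparison $\ell_I = \ell$ on $W_I$, which the remark's cardinality argument depends on.

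There is one small slip in your alternative justification of (b). The positive roots that $x$ sends to negative roots form $\Phi_{x^{-1}} = \Phi^+ \cap x^{-1}(\Phi^-)$, not $\Phi_x$. Since $x^{-1} \in W_I$ as well, the conclusion is unaffected.
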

\begin{proof}
    We compute that $\Phi_{w w_0(I) w_0} = \Phi^+ \setminus \Phi_{w w_0(I)} = \Phi^+ \setminus (\Phi_I^+ \setminus \Phi_w) = \Phi_w \cup (\Phi^+ \setminus \Phi_I^+)$.
\end{proof}

\begin{lemma}\label{lem:Phi_projection}
    For $w \in W$, we have $\Phi_{\proj^S_I(w)} = \Phi_w \cap \Phi_I^+$.
\end{lemma}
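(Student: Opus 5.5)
We need $\Phi_{w_I} = \Phi_w \cap \Phi_I^+$, where $w = w_I\,\leftindex^I w$ with $w_I \in W_I$ and $\leftindex^I w \in \leftindex^I W$. The plan is to first show that $\Phi_{\leftindex^I w}$ is disjoint from $\Phi_I^+$, then use the standard cocycle formula for inversion sets of length-additive products, and finally intersect with $\Phi_I^+$.

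Write $v = \leftindex^I w$. The first step is to show that $v^{-1}(\alpha) \in \Phi^+$ for every $\alpha \in \Phi_I^+$. By definition of $\leftindex^I W$, we have $\ell(sv) > \ell(v)$ for each $s \in I$. The standard criterion $\ell(s_\alpha v) > \ell(v) \iff v^{-1}(\alpha) > 0$ for simple $\alpha$ then gives $v^{-1}(\Delta_I) \subseteq \Phi^+$. Every $\alpha \in \Phi_I^+$ is a nonnegative combination of $\Delta_I$, so $v^{-1}(\alpha)$ is a nonnegative combination of positive roots. Since $v^{-1}(\alpha)$ is itself a root, it must be positive. Note that $\Phi_v = \{\beta > 0 : v^{-1}\beta < 0\}$, because $\Phi_v = \Phi^+ \cap v(\Phi^-)$. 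Hence $\Phi_v \cap \Phi_I^+ = \emptyset$.

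The second step uses the decomposition $w = w_I v$ with $\ell(w) = \ell(w_I) + \ell(v)$. The standard identity for length-additive products gives
\[
\Phi_w = \Phi_{w_I} \sqcup w_I(\Phi_v).
\]
To justify it, first check the inclusion $\Phi_{w_I} \cup w_I(\Phi_v) \subseteq \Phi_w$:
\begin{itemize}[label={}]
\item if $\beta \in \Phi_{w_I}$ and $v^{-1}w_I^{-1}\beta$ were positive, then $w_I^{-1}\beta < 0$ together with the cocycle reasoning leads to a contradiction;
\item more cleanly, note that $\Phi_{xy} = \Phi_x \,\triangle\, x(\Phi_y)$, with signs taken up to $\pm$.
\end{itemize}
Then compare cardinalities, using $|\Phi_u| = \ell(u)$. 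I will simply cite this standard fact, e.g.\ from \cite{bjorner2005combinatorics}.

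The third step intersects with $\Phi_I^+$. We have $\Phi_{w_I} \subseteq \Phi_I^+$ by the remark that $(\Phi_I)_{w_I} = \Phi_{w_I}$. It remains to show that $w_I(\Phi_v) \cap \Phi_I^+ = \emptyset$. Since $w_I \in W_I$ permutes $\Phi_I$, if $w_I(\beta) \in \Phi_I^+$ then $\beta \in \Phi_I$. Since also $\beta \in \Phi_v \subseteq \Phi^+$, we get $\beta \in \Phi_I^+ \cap \Phi_v = \emptyset$ by the first step. This proves $\Phi_w \cap \Phi_I^+ = \Phi_{w_I}$.

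The only real obstacle is the length-additive decomposition of inversion sets in the second step. If one prefers to avoid it, there is a direct argument. Take $\alpha \in \Phi_I^+$; then $w^{-1}\alpha = v^{-1}(w_I^{-1}\alpha)$. Here $w_I^{-1}\alpha$ lies in $\Phi_I$, and $v^{-1}$ preserves the sign of roots in $\Phi_I$ by the first step. Hence $w^{-1}\alpha < 0$ if and only if $w_I^{-1}\alpha < 0$, which gives $\Phi_w \cap \Phi_I^+ = \Phi_{w_I} \cap \Phi_I^+ = \Phi_{w_I}$ directly. I would present this direct argument, which avoids the cocycle identity entirely.
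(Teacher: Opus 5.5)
Your proposal is correct. Its core is the same as the paper's: both reduce everything to $(\leftindex^I w)^{-1}(\Delta_I) \subseteq \Phi^+$, and hence $(\leftindex^I w)^{-1}(\Phi_I^+) \subseteq \Phi^+$, exactly as in your first step.

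Your three-step outline is essentially the paper's proof. The paper cites Dyer's Lemma 4.1(vi) for $\Phi_w = \Phi_{w_I} \sqcup w_I(\Phi_{\leftindex^I w})$, and then shows that the second piece misses $\Phi_I$ because $W_I$ stabilizes $\Phi_I$.

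The version you say you would actually present is a genuine, if mild, variation. You track the sign of $w^{-1}\alpha = v^{-1}(w_I^{-1}\alpha)$ only for $\alpha \in \Phi_I^+$. This way you never need the length-additive decomposition of inversion sets, nor $\ell(w) = \ell(w_I) + \ell(v)$. You only use three facts:
\begin{enumerate}[label=(\alph*)]
\item $W_I$ stabilizes $\Phi_I$;
\item $v^{-1}$ preserves signs on $\Phi_I$;
\item $\Phi_{w_I} \subseteq \Phi_I^+$, which is the paper's earlier remark.
\end{enumerate}
The paper's proof also uses (c), implicitly, when it says ``it suffices''.

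What each route buys:
\begin{itemize}
\item Your route is more self-contained and avoids the external citation.
\item The paper's route is shorter given Dyer's lemma. It also yields the extra description $\Phi_w \setminus \Phi_I^+ = w_I(\Phi_{\leftindex^I w})$ of the part of the inversion set outside $\Phi_I$.
\end{itemize}

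The two bulleted ``justifications'' of the cocycle identity in your second step do not amount to a proof as written. The first only gestures at a contradiction, and the second needs the sign convention made precise. Citing the standard fact, or bypassing it with your direct argument as you propose, is the right call.
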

\begin{proof}
    By \cite[Lemma 4.1 (vi)]{MR3943754}, we have
    \begin{equation*}
        \Phi_w = \Phi_{w_I \leftindex^I w} = \Phi_{w_I} \sqcup w_I (\Phi_{\leftindex^I w}).
    \end{equation*}
    It suffices to show that $w_I (\Phi_{\leftindex^I w}) \cap \Phi_I = \emptyset$. We have equivalences
    \begin{align*}
        w_I (\Phi_{\leftindex^I w}) \cap \Phi_I = \emptyset
        &\iff \Phi_{\leftindex^I w} \cap \Phi_I = \emptyset
        \iff \Phi_I^+ \cap \leftindex^I w(\Phi^-) = \emptyset \\
        &\iff (\leftindex^I w)^{-1}(\Phi_I^+) \subseteq \Phi^+
        \iff (\leftindex^I w)^{-1}(\Delta_I) \subseteq \Phi^+,
    \end{align*}
    which holds since $\leftindex^I w \in \leftindex^I W$.
\end{proof}

The following results follow easily from the previous lemmas.
\begin{proposition}
    The maps $\iota^S_I$, $\proj^S_I$, and $\insn^S_I$ are monotonic.
\end{proposition}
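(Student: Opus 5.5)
The plan is to reduce everything to inclusions of inversion sets. By definition, $u \le w$ in $W$ means $\Phi_u \subseteq \Phi_w$. By the remark in \autoref{subsec:parabolic}, the same criterion works in $W_I$: for $u, w \in W_I$ we have $(\Phi_I)_u = \Phi_u$, so $u \le w$ in $W_I$ if and only if $\Phi_u \subseteq \Phi_w$. So for each of the three maps it suffices to show that the assignment $w \mapsto \Phi_{f(w)}$ is monotone with respect to inclusion. We check this directly using \autoref{lem:Phi_insertion} and \autoref{lem:Phi_projection}.

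First, the inclusion map $\iota^S_I$. For $u, w \in W_I$, the relation $u \le w$ in $W_I$ means $\Phi_u \subseteq \Phi_w$ by the remark, and this is exactly $\iota^S_I(u) \le \iota^S_I(w)$ in $W$. So $\iota^S_I$ is monotonic; in fact it is an order embedding.

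Second, the projection $\proj^S_I$. If $\Phi_u \subseteq \Phi_w$, then intersecting both sides with $\Phi_I^+$ gives $\Phi_u \cap \Phi_I^+ \subseteq \Phi_w \cap \Phi_I^+$. By \autoref{lem:Phi_projection} this says $\Phi_{u_I} \subseteq \Phi_{w_I}$. Hence $u_I \le w_I$ in $W_I$, using the remark once more to pass from $W$ to $W_I$.

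Third, the insertion $\insn^S_I$. If $u \le w$ in $W_I$, then taking the union of both sides with $\Phi^+ \setminus \Phi_I^+$ preserves the inclusion $\Phi_u \subseteq \Phi_w$. By \autoref{lem:Phi_insertion}, this gives $\Phi_{\insn^S_I(u)} \subseteq \Phi_{\insn^S_I(w)}$.

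The only point requiring care, and thus the main obstacle (a minor one), is keeping track of which order we are using, on $W$ or on $W_I$. The remark identifying $(\Phi_I)_w$ with $\Phi_w$ settles this uniformly, so no further computation is needed.
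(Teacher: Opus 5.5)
Your proposal is correct and matches the paper's proof: the paper also intersects $\Phi_u \subseteq \Phi_w$ with $\Phi_I^+$ and takes the union with $\Phi^+ \setminus \Phi_I^+$, relying on \autoref{lem:Phi_projection} and \autoref{lem:Phi_insertion}. Your explicit handling of $\iota^S_I$ and of the identification $(\Phi_I)_w = \Phi_w$ just spells out what the paper leaves implicit.
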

\begin{proof}
    If $u, w \in W$ are such that $\Phi_u \subseteq \Phi_w$, then $\Phi_u \cap \Phi_I^+ \subseteq \Phi_w \cap \Phi_I^+$ and $\Phi_u \cup (\Phi^+ \setminus \Phi_I^+) \subseteq \Phi_w \cup (\Phi^+ \setminus \Phi_I^+)$.
\end{proof}

\begin{proposition}
    The pairs $(\iota^S_I, \proj^S_I)$ and $(\proj^S_I, \insn^S_I)$ are Galois connections.
\end{proposition}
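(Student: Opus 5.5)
The plan is to verify both adjunctions directly at the level of inversion sets, using \autoref{lem:Phi_insertion} and \autoref{lem:Phi_projection}. Throughout, $u \le w$ means $\Phi_u \subseteq \Phi_w$. By the remark in \autoref{subsec:parabolic}, for elements of $W_I$ this is the same as the weak order on $W_I$ computed with $\Phi_I$. Monotonicity of all three maps is the preceding proposition, so only the adjunction equivalences remain.

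For $(\iota^S_I, \proj^S_I)$, fix $u \in W_I$ and $w \in W$; we must show $\iota^S_I(u) \le w$ if and only if $u \le \proj^S_I(w)$. Since $u \in W_I$, we have $\Phi_u \subseteq \Phi_I^+$, because $\Phi_u = (\Phi_I)_u$. By \autoref{lem:Phi_projection}, $\Phi_{\proj^S_I(w)} = \Phi_w \cap \Phi_I^+$. So the right-hand condition reads $\Phi_u \subseteq \Phi_w \cap \Phi_I^+$. Because $\Phi_u$ already lies in $\Phi_I^+$, this is equivalent to $\Phi_u \subseteq \Phi_w$, which is exactly the left-hand condition.

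For $(\proj^S_I, \insn^S_I)$, fix $w \in W$ and $u \in W_I$; we must show $\proj^S_I(w) \le u$ if and only if $w \le \insn^S_I(u)$. By \autoref{lem:Phi_projection}, the first condition is $\Phi_w \cap \Phi_I^+ \subseteq \Phi_u$. By \autoref{lem:Phi_insertion}, the second is $\Phi_w \subseteq \Phi_u \cup (\Phi^+ \setminus \Phi_I^+)$. These are equivalent by elementary set theory, since $\Phi_w \subseteq \Phi^+$.
\begin{itemize}
\item Split $\Phi_w$ as $(\Phi_w \cap \Phi_I^+) \sqcup (\Phi_w \setminus \Phi_I^+)$.
\item The second piece always lies in $\Phi^+ \setminus \Phi_I^+$.
\item The first piece lies in $\Phi_I^+$, which is disjoint from $\Phi^+ \setminus \Phi_I^+$. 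So it is contained in $\Phi_u \cup (\Phi^+ \setminus \Phi_I^+)$ if and only if it is contained in $\Phi_u$.
\end{itemize}

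There is no real obstacle here. The only point needing care is that the orders on $W_I$ and $W$ are compatible, which is supplied by the remark identifying $(\Phi_I)_u$ with $\Phi_u$. Together with \autoref{lem:adjoints_preserve_limits}, the result shows that $\proj^S_I$ preserves both meets and joins, since it is both a right adjoint and a left adjoint. This yields \autoref{thm:proj_is_lattice_hom}.
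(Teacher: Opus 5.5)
Your proposal is correct and follows the paper's proof: both adjunctions are reduced, via \autoref{lem:Phi_projection} and \autoref{lem:Phi_insertion}, to the set-theoretic equivalences $\Phi_u \subseteq \Phi_w \iff \Phi_u \subseteq \Phi_w \cap \Phi_I^+$ (using $\Phi_u \subseteq \Phi_I^+$ for $u \in W_I$) and $\Phi_w \cap \Phi_I^+ \subseteq \Phi_u \iff \Phi_w \subseteq \Phi_u \cup (\Phi^+ \setminus \Phi_I^+)$. You spell out the details that the paper leaves implicit, including the compatibility of the weak orders on $W_I$ and $W$.
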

\begin{proof}
    Let $u \in W_I$ and $w \in W$. Then $\Phi_u \subseteq \Phi_w$ if and only if $\Phi_u \subseteq \Phi_w \cap \Phi_I^+$. Similarly, $\Phi_w \cap \Phi_I^+ \subseteq \Phi_u$ if and only if $\Phi_w \subseteq \Phi_u \cup (\Phi^+ \setminus \Phi_I^+)$.
\end{proof}

The above proposition together with \autoref{lem:adjoints_preserve_limits} implies \autoref{thm:proj_is_lattice_hom}: the map $\proj^S_I$ preserves meets and joins.


\subsection{Fibers}\label{subsec:fibers}

We now analyze the fibers of the projection map. We continue to assume that $W$ is finite.

\begin{lemma}
    We have that $\leftindex^I W = [e, w_0(I) w_0]$ is a closed interval in weak order.
\end{lemma}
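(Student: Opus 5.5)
The plan is to characterize $\leftindex^I W$ in terms of inversion sets and then read off the interval directly from Lemma~\ref{lem:Phi_insertion} and Lemma~\ref{lem:Phi_projection}.

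\textbf{Step 1: an inversion-set description of $\leftindex^I W$.} I claim
\[
w \in \leftindex^I W \iff \Phi_w \cap \Phi_I^+ = \emptyset .
\]
The forward direction is exactly the chain of equivalences in the proof of Lemma~\ref{lem:Phi_projection}. If $w \in \leftindex^I W$, then $w^{-1}(\Delta_I) \subseteq \Phi^+$. Hence $w^{-1}(\Phi_I^+) \subseteq \Phi^+$, since every root of $\Phi_I^+$ is a nonnegative combination of $\Delta_I$. This gives $\Phi_I^+ \cap w(\Phi^-) = \emptyset$.

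Conversely, suppose $\Phi_w \cap \Phi_I^+ = \emptyset$. Then $w^{-1}(\alpha_s) \in \Phi^+$ for every $s \in I$, which is the standard criterion for $\ell(sw) > \ell(w)$.

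A shorter route uses Lemma~\ref{lem:Phi_projection} directly: $\Phi_{w_I} = \Phi_w \cap \Phi_I^+$, so the condition says $w_I = e$, which means $w = \leftindex^I w$.

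\textbf{Step 2: compute the inversion set of $w_0(I) w_0$.} Apply Lemma~\ref{lem:Phi_insertion} with $w = e \in W_I$. This gives $\insn^S_I(e) = w_0(I) w_0$ and
\[
\Phi_{w_0(I) w_0} = \Phi^+ \setminus \Phi_I^+ .
\]

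\textbf{Step 3: conclude.} By definition of weak order, $w \le w_0(I) w_0$ if and only if $\Phi_w \subseteq \Phi^+ \setminus \Phi_I^+$. Since $\Phi_w \subseteq \Phi^+$ always, this is equivalent to $\Phi_w \cap \Phi_I^+ = \emptyset$. By Step 1, that is exactly the condition $w \in \leftindex^I W$. Every $w$ satisfies $e \le w$, so $\leftindex^I W = [e, w_0(I) w_0]$.

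An alternative phrasing uses the Galois connection $(\proj^S_I, \insn^S_I)$: we have $\proj^S_I(w) \le e$ if and only if $w \le \insn^S_I(e)$. The left side says $w_I = e$, so the fiber over $e$ is the down-set of $\insn^S_I(e) = w_0(I) w_0$.

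The only step needing care is Step 1, namely the identification of $\leftindex^I W$ with the fiber $\proj^{-1}(e)$, and this is essentially immediate from the uniqueness of the factorization $w = w_I \leftindex^I w$. I do not expect a real obstacle.
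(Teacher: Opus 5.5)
Your proof is correct and is essentially the paper's own argument. The paper proves the lemma in one line via the Galois connection,
$w \in \leftindex^I W \iff \proj^S_I(w) = e \iff \proj^S_I(w) \le e \iff w \le \insn^S_I(e)$,
which is exactly your ``alternative phrasing''; your Steps 1--3 just unpack that adjunction into the inversion-set conditions of \autoref{lem:Phi_projection} and \autoref{lem:Phi_insertion}.
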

\begin{proof}
    For $w \in W$, we have
    \begin{equation*}
        w \in \leftindex^I W
        \iff \proj^S_I(w) = e
        \iff \proj^S_I(w) \le e
        \iff w \le \insn^S_I(e). \qedhere
    \end{equation*}
\end{proof}

\begin{remark}\label{rem:chain_iff}
    We make the following observations:
    \begin{enumerate}[label=(\roman*)]
        \item For $w \in W_I$, $(\proj^S_I)^{-1}(\{w\}) = w \leftindex^I W = [w, w w_0(I) w_0]$ is a graded poset of rank $\abs{\Phi^+ \setminus \Phi_I^+}$ and cardinality $[W : W_I]$.
        \item The interval $[e, w_0(I) w_0]$ is a chain if and only if $\abs{\Phi^+ \setminus \Phi_I^+} + 1 = [W : W_I]$. This coincidence explains why we get chains in types $A$ and $B$, but not in type $D$.
    \end{enumerate}
\end{remark}

\begin{lemma}\label{lem:le_join_iff}
    Let $u, w \in W$ and set $j' = \proj^S_I(u) \vee \proj^S_I(w)$. For $j \in (\proj^S_I)^{-1}(\{j'\})$, we have $u \vee w \le j$ if and only if $(\Phi_u \cup \Phi_w) \setminus \Phi_I^+ \subseteq \Phi_j \setminus \Phi_I^+$.
\end{lemma}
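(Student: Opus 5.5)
The plan is to unwind the definition of the join in terms of inversion sets and to use \autoref{lem:Phi_projection} to split every inversion set into its part inside $\Phi_I^+$ and its part outside. In a finite Coxeter group, $u \vee w \le j$ holds if and only if $u \le j$ and $w \le j$, by the defining property of the join. In terms of inversion sets this reads $\Phi_u \subseteq \Phi_j$ and $\Phi_w \subseteq \Phi_j$, that is, $\Phi_u \cup \Phi_w \subseteq \Phi_j$. So the lemma reduces to showing that, under the hypothesis $\proj^S_I(j) = j'$, the inclusion $\Phi_u \cup \Phi_w \subseteq \Phi_j$ is equivalent to the same inclusion restricted to roots outside $\Phi_I^+$.

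Decompose $\Phi_u \cup \Phi_w$ as the disjoint union of $(\Phi_u \cup \Phi_w) \cap \Phi_I^+$ and $(\Phi_u \cup \Phi_w) \setminus \Phi_I^+$, and decompose $\Phi_j$ in the same way. Then $\Phi_u \cup \Phi_w \subseteq \Phi_j$ holds if and only if both of the following hold:
\begin{enumerate}[label=(\alph*)]
    \item $(\Phi_u \cup \Phi_w) \cap \Phi_I^+ \subseteq \Phi_j \cap \Phi_I^+$;
    \item $(\Phi_u \cup \Phi_w) \setminus \Phi_I^+ \subseteq \Phi_j \setminus \Phi_I^+$.
\end{enumerate}
Condition (b) is exactly the condition in the statement, so it remains to show that (a) holds automatically for every $j$ in the fiber over $j'$.

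To verify (a), apply \autoref{lem:Phi_projection} to each of $u$, $w$ and $j$. This gives
\begin{equation*}
    (\Phi_u \cup \Phi_w) \cap \Phi_I^+ = \Phi_{\proj^S_I(u)} \cup \Phi_{\proj^S_I(w)}
    \quad\text{and}\quad
    \Phi_j \cap \Phi_I^+ = \Phi_{\proj^S_I(j)} = \Phi_{j'}.
\end{equation*}
Since $\proj^S_I(u) \le j'$ and $\proj^S_I(w) \le j'$ in $W_I$, and weak order on $W_I$ is again given by containment of inversion sets, we get $\Phi_{\proj^S_I(u)} \cup \Phi_{\proj^S_I(w)} \subseteq \Phi_{j'}$, which is (a).

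Here the remark identifying $(\Phi_I)_x$ with $\Phi_x$ for $x \in W_I$ is used: it guarantees that the order on $W_I$ is containment of the same sets $\Phi_x$. Both directions of the equivalence then follow at once. There is no real obstacle in this argument; the only point needing care is that $j$ lies in the fiber, so that its part inside $\Phi_I^+$ is exactly $\Phi_{j'}$.
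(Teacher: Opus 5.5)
Your proof is correct and follows essentially the same route as the paper: the paper reduces to the single-element statement ($u \le j$ iff $\Phi_u \setminus \Phi_I^+ \subseteq \Phi_j \setminus \Phi_I^+$) and notes that $\proj^S_I(u) \le j' = \proj^S_I(j)$ together with \autoref{lem:Phi_projection} makes the part inside $\Phi_I^+$ automatic. You do the same split for $\Phi_u \cup \Phi_w$ at once, and you additionally make explicit that the order on $W_I$ is containment of the same sets $\Phi_x$.
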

\begin{proof}
    It suffices to show that $u \le j$ if and only if $\Phi_u \setminus \Phi_I^+ \subseteq \Phi_j \setminus \Phi_I^+$. We have
    \begin{equation*}
        \proj^S_I(u) \le \proj^S_I(u) \vee \proj^S_I(w) = j' = \proj^S_I(j),
    \end{equation*}
    so $\Phi_u \cap \Phi_I^+ \subseteq \Phi_j \cap \Phi_I^+$. The result is now clear.
\end{proof}

\subsection{The algorithm}\label{subsec:algorithm}

We now describe an algorithm for computing joins in finite Coxeter groups $W$, based on the approach outlined at the start of this section.

As precomputation, we first choose an ordering $S = \{s_1, \ldots, s_n\}$ and for $0 \le k \le n$ define $I_k = \{s_1, \ldots, s_k\}$. Now compute the set of elements $\leftindex^{I_k} (W_{I_{k+1}}) = [e, w_0(I_{k}) w_0(I_{k+1})]$ for $0 \le k \le n-1$. This can be done for example by repeatedly `descending' from $w_0(I_{k}) w_0(I_{k+1})$. Choose a linear ordering $\prec$ on $\leftindex^{I_k} (W_{I_{k+1}})$ extending the weak order.

Given $u, w \in W$, we define $j = u \vee w$, and for $0 \le k \le n$, define $u_k = \proj^S_{I_k}(u), w_k = \proj^S_{I_k}(w), j_k = \proj^S_{I_k}(j)$. We can compute $u_k$ and $w_k$ recursively using $u_n = u, w_n = w$ and $u_k = \proj^{I_{k+1}}_{I_k}(u_{k+1})$ and $w_k = \proj^{I_{k+1}}_{I_k}(w_{k+1})$ for $k = n-1, n-2, \ldots, 0$. Now $j_0 = e$, and for $1 \le k \le n$, $\proj^{I_k}_{I_{k-1}}(j_k) = j_{k-1}$, so $j_k = j_{k-1} z$ for some $z \in \leftindex^{I_{k-1}} (W_{I_k})$. In fact, $z \in \leftindex^{I_{k-1}} (W_{I_k})$ is the minimal element with respect to $\prec$ such that $u_k \le j_{k-1} z$ and $w_k \le j_{k-1} z$. This condition can be checked using \autoref{lem:le_join_iff}. Finally, we get $u \vee w = j = j_n$.

\begin{remark}
    The choice of ordering of $S$ does not affect the correctness of the algorithm but does affect its run-time. In practice, we choose the ordering so that
\begin{equation*}
    \sum_{k=0}^{n-1} [W_{I_{k+1}} : W_{I_k}]
\end{equation*}
is small.
\end{remark}

\section{Palindromic Bruhat preclosure}\label{sec:pbr}

For subsets $A \subseteq \Phi^+$, we have mentioned various notions of closure in the introduction, namely the $2$-closure $\overline{A}$, Bruhat preclosure $\overline{A}^\br$, and palindromic Bruhat preclosure $\overline{A}^\pbr$. We recall from \cite{MR3943754} that a subset $A \subseteq \Phi^+$ is \emph{$2$-closed} if for all $\alpha, \beta \in A$ and $a, b \in \RR_{\ge 0}$ such that $a \alpha + b \beta \in \Phi^+$, we have $a \alpha + b \beta \in A$. The \emph{$2$-closure} of $A \subseteq \Phi^+$ is the intersection of all $2$-closed subsets of $\Phi^+$ containing $A$.

We can transport these notions of closure to the set of reflections $T = \bigcup_{w \in W} w S w^{-1}$, via the bijection $\Phi^+ \to T$ given by $\alpha \mapsto s_\alpha$. For convenience, we will restate some of the definitions in terms of reflections here:

\begin{definition}[\protect{\cite{dermenjian2025bruhatpreclosure}}]
    Let $A \subseteq T$. The \emph{Bruhat preclosure} $\overline{A}^\br$ of $A$ is the set of $t \in T$ such that there exists a sequence $t_1, t_2, \ldots, t_m \in A$ with
    \begin{equation}\label{eqn:t_eq_prod}
        t = t_1 t_2 \ldots t_m,
    \end{equation}
    and for $0 \le r \le m-1$, the prefix length-increasing condition
    \begin{equation}\label{eqn:incr_len}
        \ell(t_1 t_2 \ldots t_r) < \ell(t_1 t_2 \ldots t_r t_{r+1})
    \end{equation}
    holds.
\end{definition}

\begin{definition}
    Let $A \subseteq T$. The \emph{palindromic Bruhat preclosure} $\overline{A}^\pbr$ of $A$ is the set of $t \in T$ such that there exists a sequence $t_1, t_2, \ldots, t_m \in A$ satisfying \eqref{eqn:t_eq_prod} and \eqref{eqn:incr_len} for $0 \le r \le m-1$, and furthermore $t_r = t_{m+1-r}$ for $1 \le r \le m$.
\end{definition}

Note that the $m$ in the above definitions are necessarily odd. Bruhat preclosure is a \emph{preclosure operator}, meaning that for any $A \subseteq T$, we have $A \subseteq \overline{A}^\br$, and for any $A \subseteq B \subseteq T$, we have $\overline{A}^\br \subseteq \overline{B}^\br$. Similarly, palindromic Bruhat preclosure is also a preclosure operator.

\subsection{Proof of \autoref{thm:pbr_subseteq_2_closure_root}}

In this section, we prove \autoref{thm:pbr_subseteq_2_closure_root}, or equivalently the corresponding statement for subsets of reflections instead of positive roots:
\begin{theorem}\label{thm:pbr_subseteq_2_closure}
    For any $A \subseteq T$, we have $A \subseteq \overline{A}^\pbr \subseteq \overline{A}$.
\end{theorem}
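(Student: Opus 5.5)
The inclusion $A \subseteq \overline{A}^\pbr$ is immediate: take $m = 1$ and $t_1 = t \in A$. The only condition in \eqref{eqn:incr_len} is then $\ell(e) = 0 < \ell(t)$, which holds since $t \neq e$. The substance is the second inclusion. I will work with roots and identify $t\in T$ with $\alpha_t\in\Phi^+$.

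\emph{Setup.} Fix $t \in \overline{A}^\pbr$ with a palindromic witness $t_1,\dots,t_{k-1},t_k,t_{k-1},\dots,t_1$, so $m=2k-1$, and put $\alpha_r=\alpha_{t_r}$. Set $w_r=t_1\cdots t_r$. Since $\ell(wt_r)>\ell(w)$ if and only if $w(\alpha_r)\in\Phi^+$, condition \eqref{eqn:incr_len} is a family of root positivity conditions. I will translate them as follows.
\begin{itemize}
\item For $r\le k$, put $\gamma_r:=w_{r-1}(\alpha_r)$. Condition \eqref{eqn:incr_len} says $\gamma_r\in\Phi^+$, and $w_r=s_{\gamma_r}w_{r-1}$.
\item $\beta:=\gamma_k=w_{k-1}(\alpha_k)$ is the root of $t$, because $t=w_{k-1}t_kw_{k-1}^{-1}$.
\item For a step $r>k$, write $j=m+1-r<k$. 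Then $w_{r-1}=t\,w_j$, so the condition becomes $s_\beta(\gamma_j)\in\Phi^+$.
\end{itemize}
So the hypotheses are: $\beta>0$, and $\gamma_j>0$ and $s_\beta(\gamma_j)>0$ for all $j<k$.

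\emph{Induction.} I will argue by induction on $k$. The case $k=1$ is $t\in A$. For the inductive step I peel off the outermost letter. Let $\varepsilon:=t_2\cdots t_{k-1}(\alpha_k)$ be the root of the inner palindrome $t'=t_1tt_1$. Then
\[
\beta=s_{\alpha_1}(\varepsilon)=\varepsilon-\langle\varepsilon,\alpha_1^\vee\rangle\alpha_1 .
\]
Suppose I show two things: that $\varepsilon\in\Phi^+$ and the inner word satisfies \eqref{eqn:incr_len}, so that $\varepsilon\in\overline{A}^\pbr$ and by induction $\varepsilon\in\overline{A}$; and that $-\langle\varepsilon,\alpha_1^\vee\rangle\ge0$. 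Then $\beta$ is a nonnegative combination of $\varepsilon,\alpha_1\in\overline A$ and is positive, so $\beta\in\overline{A}$ because $\overline A$ is $2$-closed.

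To get the sign and positivity facts, I will work inside the rank-two root subsystem spanned by $\beta$ and $\gamma_1=\alpha_1$. Note $\varepsilon=s_{\alpha_1}(\beta)$, so $\varepsilon$ lies in this dihedral subsystem. The conditions $\alpha_1>0$, $\beta>0$, $s_\beta(\alpha_1)>0$ should pin down the relative position of $\alpha_1$ and $\beta$ in the dihedral positive cone. Concretely, I want $\alpha_1$ not to lie "between" $\beta$ and $s_\beta\alpha_1$ on the wrong side, and from this read off $\langle\beta,\alpha_1^\vee\rangle\ge0$ and $\varepsilon>0$.

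\emph{Main obstacle.} The hard step is transferring the length conditions to the inner word $t_2,\dots,t_{k-1},t_k,t_{k-1},\dots,t_2$, because its prefixes are $t_1^{-1}w_r$ rather than $w_r$. Left multiplication by $t_1$ does not preserve positivity, so a naive inductive step may fail. I would first try to strengthen the induction hypothesis: the inner prefix conditions read $s_{\alpha_1}(\gamma_r)>0$ and $s_{\alpha_1}(s_\beta\gamma_r)>0$, and I would try to prove these from dihedral considerations involving $\alpha_1,\gamma_r,\beta$. If that fails, I would peel from the inside instead, setting $\varepsilon_j=t_j\cdots t_{k-1}(\alpha_k)$ and showing each $\varepsilon_j$ is positive and equals $\varepsilon_{j+1}+c_j\alpha_j$ with $c_j\ge0$. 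This would use the full conjugated conditions $w_{j-1}(\alpha_j)=\gamma_j>0$ and $s_\beta\gamma_j>0$, pulled back by $w_{j-1}$ through a positivity argument along the chain.
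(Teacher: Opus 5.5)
Your argument for $A\subseteq\overline{A}^{\pbr}$ is fine. For the second inclusion there are three problems: a sign error, a main route that fails, and a missing key step in the fallback.

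\textbf{The sign error.} Your translation of \eqref{eqn:incr_len} on the second half of the word has the wrong sign. For $r>k$ and $j=m+1-r$ one has $w_j(\alpha_j)=w_{j-1}t_j(\alpha_j)=-\gamma_j$. So the condition is $w_{r-1}(\alpha_j)=t\,w_j(\alpha_j)=-s_\beta(\gamma_j)\in\Phi^+$. The correct hypotheses are therefore $\gamma_j\in\Phi^+$ and $s_\beta(\gamma_j)\in\Phi^-$, i.e.\ $\gamma_j\in\Phi_t$. Your version cannot give the sign you want: it is satisfied by two simple roots $\alpha_1,\beta$ of $A_2$, for which $\langle\beta,\alpha_1^\vee\rangle=-1$. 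With the correct sign, $s_\beta(\gamma_j)=\gamma_j-\langle\gamma_j,\beta^\vee\rangle\beta\in\Phi^-$ forces $\langle\gamma_j,\beta^\vee\rangle>0$. In particular $-\langle\varepsilon,\alpha_1^\vee\rangle=\langle\beta,\alpha_1^\vee\rangle>0$ is immediate, with no dihedral analysis.

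\textbf{The main route fails.} The inner palindrome need not satisfy \eqref{eqn:incr_len}. In $I_2(5)$ with simple reflections $s_1,s_2$, take $t_1=s_1$, $t_2=s_1s_2s_1$, $t_3=s_2$. The prefixes of $t_1t_2t_3t_2t_1$ are $s_1$, $s_2s_1$, $s_2s_1s_2$, $s_1s_2s_1s_2$, $s_1s_2s_1s_2s_1=w_0$, of lengths $1,2,3,4,5$. So this is a valid palindromic witness for the reflection $w_0$. But the prefixes of $t_2t_3t_2$ are $s_1s_2s_1$, $s_1s_2s_1s_2$, $s_2s_1s_2$, of lengths $3,4,3$.

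Nor can $\varepsilon\in\Phi^+$ be read off from the rank-two system spanned by $\alpha_1$ and $\beta$. Taking $\alpha_1=\beta$ satisfies all the local conditions ($\alpha_1,\beta\in\Phi^+$, $s_\beta(\alpha_1)\in\Phi^-$), yet gives $\varepsilon=-\beta$. So an induction on membership in $\overline{A}^{\pbr}$ through inner words cannot work.

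\textbf{The fallback and its missing step.} Your fallback, peeling from the inside, is the right fix and is exactly the paper's proof, but its key step is not there. Put $\varepsilon_j=t_j\cdots t_{k-1}(\alpha_k)=w_{j-1}^{-1}(\beta)$, so $\varepsilon_1=\beta$ and $\varepsilon_k=\alpha_k$. Since $\varepsilon_{j+1}=t_j(\varepsilon_j)$, we get $\varepsilon_j=\varepsilon_{j+1}+c_j\alpha_j$. By $W$-invariance of the form,
$c_j=\langle\varepsilon_j,\alpha_j^\vee\rangle=\langle\beta,(w_{j-1}\alpha_j)^\vee\rangle=\langle\beta,\gamma_j^\vee\rangle$.
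This has the sign of $\langle\gamma_j,\beta^\vee\rangle$, so $c_j>0$.

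Now run a descending induction on $j$, starting from $s_{\varepsilon_k}=t_k\in A$. At each step $\varepsilon_j\in\Phi^+$, being a positive combination of positive roots, and $s_{\varepsilon_j}\in\overline{A}$ by $2$-closedness of $\overline{A}$. The induction ends at $\varepsilon_1=\beta$.

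The essential point is that the induction runs on membership in $\overline{A}$, not $\overline{A}^{\pbr}$, and uses the outer word's conditions $\gamma_j\in\Phi_t$ directly. That is why no length condition on inner words is ever needed.
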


We record the following immediate corollaries:
\begin{corollary}\label{cor:pbr_of_2_closed}
    If $A \subseteq T$ is $2$-closed, then $\overline{A}^\pbr = A$.
\end{corollary}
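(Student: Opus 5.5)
This follows directly from \autoref{thm:pbr_subseteq_2_closure} together with the elementary fact that a $2$-closed set is its own $2$-closure. Suppose $A \subseteq T$ is $2$-closed. Then $A$ is one of the $2$-closed sets containing $A$. The $2$-closure $\overline{A}$ is the intersection of all such sets, so $\overline{A} \subseteq A$. Conversely, every set in that intersection contains $A$, so $A \subseteq \overline{A}$. Hence $\overline{A} = A$. Here $2$-closedness of a set of reflections means $2$-closedness of the corresponding set of positive roots under the bijection $\alpha \mapsto s_\alpha$, and intersections are transported compatibly along this bijection, so the argument holds verbatim on the reflection side.

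Now apply \autoref{thm:pbr_subseteq_2_closure} to get
\begin{equation*}
    A \subseteq \overline{A}^\pbr \subseteq \overline{A} = A,
\end{equation*}
so all inclusions are equalities and $\overline{A}^\pbr = A$.

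There is no real obstacle once \autoref{thm:pbr_subseteq_2_closure} is available; all the substance lies in that theorem. The only point to check is that the translation between roots and reflections respects the closure operations, which is immediate because the map $\alpha \mapsto s_\alpha$ is a bijection $\Phi^+ \to T$.
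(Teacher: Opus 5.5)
Your proposal is correct and matches the paper's approach: the paper records this as an immediate corollary of \autoref{thm:pbr_subseteq_2_closure}, and it follows exactly from the chain $A \subseteq \overline{A}^\pbr \subseteq \overline{A} = A$ that you write down, using that a $2$-closed set equals its own $2$-closure.
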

Compare this with \cite[Theorem 4.21]{dermenjian2025bruhatpreclosure}.

\begin{corollary}\label{cor:2_closure_of_pbr}
    For any $A \subseteq T$, we have $\overline{\overline{A}^\pbr} = \overline{A}$.
\end{corollary}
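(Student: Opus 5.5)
Corollary \ref{cor:2_closure_of_pbr} follows from \autoref{thm:pbr_subseteq_2_closure} together with the standard properties of the $2$-closure operator $A \mapsto \overline{A}$. The argument is a sandwich between two inclusions. We use three properties of the $2$-closure. It is extensive: $A \subseteq \overline{A}$. It is monotone: $A \subseteq B$ implies $\overline{A} \subseteq \overline{B}$. It is idempotent: $\overline{\overline{A}} = \overline{A}$. All three are immediate from the definition of $\overline{A}$ as the intersection of all $2$-closed sets containing $A$. Idempotence uses that an intersection of $2$-closed sets is $2$-closed, so $\overline{A}$ is itself $2$-closed and is the smallest $2$-closed set containing it. These properties transfer verbatim to subsets of $T$ via the bijection $\alpha \mapsto s_\alpha$.

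For the inclusion $\overline{A} \subseteq \overline{\overline{A}^\pbr}$, the first half of \autoref{thm:pbr_subseteq_2_closure} gives $A \subseteq \overline{A}^\pbr$. Applying monotonicity of $2$-closure then gives the claim.

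For the inclusion $\overline{\overline{A}^\pbr} \subseteq \overline{A}$, the second half of \autoref{thm:pbr_subseteq_2_closure} gives $\overline{A}^\pbr \subseteq \overline{A}$. Taking $2$-closures and using monotonicity yields $\overline{\overline{A}^\pbr} \subseteq \overline{\overline{A}}$. Idempotence identifies the right-hand side with $\overline{A}$.

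Combining the two inclusions gives the equality. There is no real obstacle here: all the content lies in \autoref{thm:pbr_subseteq_2_closure}, and the only point worth stating explicitly is that the $2$-closure is idempotent, since the paper introduces it only as an intersection of $2$-closed sets.
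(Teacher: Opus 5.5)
Your argument is correct and is exactly what the paper intends. The paper gives no separate proof and simply lists this as an immediate corollary of \autoref{thm:pbr_subseteq_2_closure}, obtained by applying the monotone, idempotent $2$-closure to the chain $A \subseteq \overline{A}^\pbr \subseteq \overline{A}$.
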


We now prove \autoref{thm:pbr_subseteq_2_closure}. The following hypotheses will be in force until the end of this subsection: Let $t \in T$ and let $t_1, \ldots, t_l \in T$ be such that $t = t_1 t_2 \ldots t_{2l-1}$, where for $l+1 \le r \le 2l-1$, we set $t_r = t_{2l-r}$. Let $\alpha \in \Phi^+$ be such that $t = s_\alpha$, and for $1 \le r \le l$, let $\alpha_r \in \Phi^+$ be such that $t_r = s_{\alpha_r}$.

The following lemma gives a translation between the prefix length-increasing condition and a membership condition on roots.
\begin{lemma}\label{lem:incr_len_iff}
    The following are equivalent:
    \begin{enumerate}[label=(\roman*)]
        \item for $0 \le r \le 2l-2$, the inequality \eqref{eqn:incr_len} holds;
        \item for $0 \le r \le l-1$, $t_1 \ldots t_r(\alpha_{r+1}) \in \Phi_t$.
    \end{enumerate}
\end{lemma}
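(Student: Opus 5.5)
We use the standard fact that for $w \in W$ and a reflection $s_\beta$ with $\beta \in \Phi^+$, we have $\ell(w) < \ell(w s_\beta)$ if and only if $w(\beta) \in \Phi^+$. Write $w_r = t_1 t_2 \ldots t_r$ for $0 \le r \le 2l-1$, so $w_{2l-1} = t$. Condition \eqref{eqn:incr_len} at index $r$ then says $w_r(\alpha_{r+1}) \in \Phi^+$, where $\alpha_{r+1}$ means the positive root of $t_{r+1}$ (for $r \ge l$ this is $\alpha_{2l-r-1}$). So (i) is the statement that $w_r(\beta_{r+1}) \in \Phi^+$ for all $0 \le r \le 2l-2$, where $\beta_{r+1}$ is the positive root of $t_{r+1}$.

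The key step is to relate the indices $r \ge l$ back to the indices $r \le l-1$ through palindromicity. For $0 \le r \le l-1$, set $r' = 2l-2-r$, which runs over $l-1, \ldots, 2l-2$. The suffix $t_{r'+2} \ldots t_{2l-1}$ equals $t_r t_{r-1} \ldots t_1 = w_r^{-1}$. Hence
\[ t = w_{r'+1} w_r^{-1}, \]
that is, $w_{r'+1} = t w_r$ and $w_{r'} = t w_r t_{r+1}$, since $t_{r'+1} = t_{r+1}$. Therefore
\[ w_{r'}(\alpha_{r+1}) = t w_r t_{r+1}(\alpha_{r+1}) = -t\bigl(w_r(\alpha_{r+1})\bigr). \]
Writing $\gamma = w_r(\alpha_{r+1})$, the two conditions at indices $r$ and $r'$ become $\gamma \in \Phi^+$ and $-t(\gamma) \in \Phi^+$. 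Since $t = t^{-1}$, the second is $t(\gamma) \in \Phi^-$, so together they say exactly $\gamma \in \Phi^+ \cap t(\Phi^-) = \Phi_t$.

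As $r$ runs over $0, \ldots, l-1$, the pair $\{r, r'\}$ covers all of $0, \ldots, 2l-2$. The middle index $r = l-1$ satisfies $r = r'$, so there the single condition $\gamma \in \Phi^+$ must already force $t(\gamma) \in \Phi^-$. This holds: $w_{l-1} t_l w_{l-1}^{-1} = t$, so $\gamma = w_{l-1}(\alpha_l) = \pm \alpha$. Then $\gamma \in \Phi^+$ gives $\gamma = \alpha$, and $\alpha \in \Phi_t$. Conversely, $\gamma \in \Phi_t$ trivially implies $\gamma \in \Phi^+$. Conjoining over $r$ then gives the equivalence of (i) and (ii).

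The main obstacle is the index bookkeeping in the identity $w_{r'} = t w_r t_{r+1}$, in particular checking that the suffix is exactly $w_r^{-1}$ and that $t_{r'+1} = t_{r+1}$. The middle term must also be handled separately as above.
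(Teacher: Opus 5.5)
Your proof is correct and follows the paper's argument. Both use palindromicity to rewrite each prefix past the middle as $t\,t_1\ldots t_r t_{r+1}$, which turns the second half of the length conditions into $t\,t_1\ldots t_r(\alpha_{r+1})\in\Phi^-$; paired with the first-half conditions, this gives exactly membership in $\Phi_t$. Your separate treatment of the middle index is harmless but unnecessary: your identity $w_{r'}(\alpha_{r+1}) = -t\bigl(w_r(\alpha_{r+1})\bigr)$ with $r=r'=l-1$ already yields $t(\gamma)=-\gamma$.
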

\begin{proof}
    For $0 \le r \le l-1$, condition \eqref{eqn:incr_len} is equivalent to
    \begin{equation}\label{eqn:prod_pos}
        t_1 \ldots t_r(\alpha_{r+1}) \in \Phi^+.
    \end{equation}
    For $l-1 \le r \le 2l-2$, condition \eqref{eqn:incr_len} is equivalent to
    \begin{equation*}
        \ell(t t_1 t_2 \ldots t_{r'+1}) = \ell(t_1 t_2 \ldots t_r) < \ell(t_1 t_2 \ldots t_r t_{r+1}) = \ell(t t_1 t_2 \ldots t_{r'}),
    \end{equation*}
    where we set $r' = 2l-2-r$, so that $0 \le r' \le l - 1$. This is in turn equivalent to
    \begin{equation*}
        t t_1 t_2 \ldots t_{r'}(\alpha_{r'+1}) \in \Phi^-.
    \end{equation*}
    Making a change of variables, the validity of \eqref{eqn:incr_len} in the above range is equivalent to
    \begin{equation}\label{eqn:prod_neg}
        t t_1 \ldots t_r(\alpha_{r+1}) \in \Phi^-
    \end{equation}
    for $0 \le r \le l-1$.
    We conclude by noting that \eqref{eqn:prod_pos} and \eqref{eqn:prod_neg} is equivalent to (ii).
\end{proof}

Let $V$ denote the ambient real vector space containing the root system $\Phi$, with symmetric bilinear form $(\cdot, \cdot)$. Let $\langle \cdot, \cdot \rangle : V \times V^* \to \RR$ denote the canonical pairing. For $\alpha \in \Phi$, define its \emph{coroot} to be $\alpha^\vee \in V^*$ such that for $v \in V$, $\langle v, \alpha^\vee \rangle = \frac{2 (v, \alpha)}{(\alpha, \alpha)}$. Thus $s_\alpha \in T$ acts on $v \in V$ via
\begin{equation*}
    s_\alpha(v) = v - \langle v, \alpha^\vee \rangle \alpha,
\end{equation*}
and the bilinear form $(\cdot, \cdot)$ is $W$-invariant.

\begin{lemma}\label{lem:suff_prod}
    If the inequalities \eqref{eqn:incr_len} hold for $0 \le r \le 2l-2$, then
    \begin{enumerate}[label=(\roman*)]
        \item $t_{l-1} \ldots t_1(\alpha) = \alpha_l$;
        \item for $0 \le r \le l-1$, we have
            \begin{equation*}
                t_r \ldots t_1(\alpha) = t_{r+1} \ldots t_1(\alpha) + \langle t_r \ldots t_1(\alpha), \alpha_{r+1}^\vee \rangle \alpha_{r+1},
            \end{equation*}
            with $\langle t_r \ldots t_1(\alpha), \alpha_{r+1}^\vee \rangle > 0$;
        \item for $0 \le r \le l-1$, $t_r \ldots t_1(\alpha) \in \Phi^+$.
    \end{enumerate}
\end{lemma}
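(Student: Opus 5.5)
Part (i) is pure algebra and does not use the length conditions. Part (ii) is a direct computation, and part (iii) is where \autoref{lem:incr_len_iff} does the work.

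For (i), start from $t = t_1 \ldots t_{l-1} t_l t_{l-1} \ldots t_1$, so that
\begin{equation*}
    t_{l-1}\ldots t_1 \, t \, t_1 \ldots t_{l-1} = t_l,
\end{equation*}
i.e.\ $s_{\beta} = s_{\alpha_l}$ where $\beta = t_{l-1}\ldots t_1(\alpha)$, since $w s_\alpha w^{-1} = s_{w(\alpha)}$. Hence $\beta = \pm\alpha_l$. To fix the sign, apply \autoref{lem:incr_len_iff} with $r = l-1$: we get $t_1\ldots t_{l-1}(\alpha_l) \in \Phi_t$, so $t\, t_1 \ldots t_{l-1}(\alpha_l) \in \Phi^-$. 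Since $t = t_1\ldots t_{l-1} t_l t_{l-1}\ldots t_1$, this reads $t_1\ldots t_{l-1} t_l(\alpha_l) = -t_1\ldots t_{l-1}(\alpha_l) \in \Phi^-$. Therefore $t_1\ldots t_{l-1}(\alpha_l) \in \Phi^+$. On the other hand, $\Phi_t$ is exactly the set of positive roots $\gamma$ with $t(\gamma)$ negative. So it is cleaner to decide the sign via (iii) at $r = l-1$, and I would prove (iii) first, or simultaneously.

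For (ii), the displayed identity is just $t_{r+1}(v) = v - \langle v, \alpha_{r+1}^\vee\rangle \alpha_{r+1}$ with $v = t_r\ldots t_1(\alpha)$. The content is the positivity of the coefficient. By $W$-invariance,
\begin{equation*}
    \langle t_r \ldots t_1(\alpha), \alpha_{r+1}^\vee\rangle = \langle \alpha, (t_1\ldots t_r(\alpha_{r+1}))^\vee\rangle.
\end{equation*}
Put $\gamma = t_1\ldots t_r(\alpha_{r+1})$. By \autoref{lem:incr_len_iff}, $\gamma \in \Phi_t$, so $\gamma > 0$ and $s_\alpha(\gamma) = \gamma - \langle \gamma, \alpha^\vee\rangle \alpha < 0$. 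This forces $\langle \gamma, \alpha^\vee\rangle > 0$, since otherwise $s_\alpha(\gamma)$ would be a nonnegative combination of the positive roots $\gamma$ and $\alpha$. As $\langle\gamma,\alpha^\vee\rangle$ and $\langle\alpha,\gamma^\vee\rangle$ have the same sign (both are positive multiples of $(\alpha,\gamma)$), the coefficient is positive.

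For (iii), I would argue by downward induction on $r$. The key step is establishing the base case $r = l-1$, i.e.\ that $t_{l-1}\ldots t_1(\alpha) = +\alpha_l$, which simultaneously settles (i). Writing $\beta_r = t_r\ldots t_1(\alpha)$, (ii) gives $\beta_r = \beta_{r+1} + c_r \alpha_{r+1}$ with $c_r > 0$. So if $\beta_{r+1} \in \Phi^+$, then $\beta_r$ is a positive combination of positive roots, hence $\beta_r \in \Phi^+$ (a root is either positive or negative). The base case is the main obstacle. I would use the $r = l-1$ instance of (ii) applied to $\beta_{l-1} = \pm\alpha_l$: we have $t_l(\beta_{l-1}) = -\beta_{l-1}$, and the coefficient is $\langle \beta_{l-1}, \alpha_l^\vee\rangle = \pm 2$. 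Positivity from (ii) then forces the sign $+$, giving $\beta_{l-1} = \alpha_l \in \Phi^+$. This completes (i), and the induction yields (iii) for all $0 \le r \le l-1$; the case $r=0$ is $\alpha \in \Phi^+$, consistent with the hypotheses.
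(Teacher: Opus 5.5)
Your proof is correct and follows the paper's approach: you get positivity of the coefficient in (ii) from $t_1\ldots t_r(\alpha_{r+1}) \in \Phi_t$ via $W$-invariance, and you get (iii) by downward induction using (ii). The one difference is how you fix the sign in (i): the paper just combines $\alpha = \pm t_1\ldots t_{l-1}(\alpha_l)$ with $t_1\ldots t_{l-1}(\alpha_l) \in \Phi^+$ and $\alpha \in \Phi^+$, all of which you had already derived in your first paragraph, so the detour through $\langle \pm\alpha_l, \alpha_l^\vee\rangle = \pm 2$ is valid but unnecessary (and your opening claim that (i) does not use the length conditions is inaccurate, since only $\beta = \pm\alpha_l$ is pure algebra).
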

\begin{proof}
    To show (i), we have $s_\alpha = t = s_{t_1 \ldots t_{l-1}(\alpha_l)}$, so $\alpha = \pm t_1 \ldots t_{l-1}(\alpha_l)$. Equation \eqref{eqn:prod_pos} implies that the sign is positive.

    Now we show (ii). Let $0 \le r \le l-1$. We have
    \begin{equation*}
        t t_1 \ldots t_r(\alpha_{r+1}) = s_\alpha(t_1 \ldots t_r(\alpha_{r+1})) = t_1 \ldots t_r(\alpha_{r+1}) - \langle t_1 \ldots t_r(\alpha_{r+1}), \alpha^\vee \rangle \alpha.
    \end{equation*}
    By \eqref{eqn:prod_pos} and \eqref{eqn:prod_neg}, we must have $\langle t_1 \ldots t_r(\alpha_{r+1}), \alpha^\vee \rangle > 0$. This implies
    \begin{equation*}
        \langle t_r \ldots t_1(\alpha), \alpha_{r+1}^\vee \rangle = \langle \alpha, t_1 \ldots t_r(\alpha_{r+1})^\vee \rangle > 0.
    \end{equation*}
    Rearranging
    \begin{equation*}
        t_{r+1} \ldots t_1(\alpha) = s_{\alpha_{r+1}} (t_r \ldots t_1(\alpha)) = t_r \ldots t_1(\alpha) - \langle t_r \ldots t_1(\alpha), \alpha_{r+1}^\vee \rangle \alpha_{r+1}
    \end{equation*}
    gives the result.

    The $r=l-1$ case of (iii) follows from (i). For $r < l-1$, we induct using (ii).
\end{proof}

\begin{proof}[Proof of \autoref{thm:pbr_subseteq_2_closure}]
    We only need to show that $\overline{A}^\pbr \subseteq \overline{A}$.

    Let $t \in \overline{A}^\pbr$, so that there are $t_1, \ldots, t_l \in A$ such that $t = t_1 t_2 \ldots t_{2l-1}$, where for $l+1 \le r \le 2l-1$, we set $t_r = t_{2l-r}$, and for $0 \le r \le 2l-2$, the inequality \eqref{eqn:incr_len} holds. Thus, we are in the above setup, and we let $\alpha, \alpha_r \in \Phi^+$ be as above.

    We show by descending induction on $r$ that for $l-1 \ge r \ge 0$, $s_{t_r \ldots t_1(\alpha)} \in \overline{A}$. For $r = l-1$, by \autoref{lem:suff_prod} (i), $s_{t_{l-1} \ldots t_1(\alpha)} = t_l \in A$. Now suppose that $r < l-1$. We have $s_{\alpha_{r+1}} = t_{r+1} \in A$, and by the induction hypothesis, $s_{t_{r+1} \ldots t_1(\alpha)} \in \overline{A}$. By \autoref{lem:suff_prod} (ii) and (iii), and the fact that $\overline{A}$ is $2$-closed, we have $s_{t_r \ldots t_1(\alpha)} \in \overline{A}$, completing the induction.

    Taking $r = 0$, we get $t = s_\alpha \in \overline{A}$ as required.
\end{proof}

\subsection{Proof of \autoref{thm:simply_laced_pbr_eq_2_closure}}

In this section we give an explicit description of $2$-closures in the simply-laced case (\autoref{thm:2_closure_simply_laced}), then use it to prove \autoref{thm:simply_laced_pbr_eq_2_closure}. Assume for this section that $\Phi$ is a finite simply-laced root system. In particular, $\Phi$ is crystallographic, and has all roots of the same length. We normalize the symmetric bilinear form $(\cdot, \cdot)$ so that $(\alpha, \alpha) = 2$ for any root $\alpha \in \Phi$.

In the arguments that follow, we will often use the following lemma implicitly:
\begin{lemma}\label{lem:sum_is_root_iff}
    Assume that $\Phi$ is finite, simply-laced. Let $\alpha, \beta \in \Phi$. Then
    \begin{enumerate}[label=(\roman*)]
        \item $\alpha = \beta$ if and only if $(\alpha, \beta) = 2$;
        \item $\alpha = -\beta$ if and only if $(\alpha, \beta) = -2$;
        \item $\alpha + \beta \in \Phi$ if and only if $(\alpha, \beta) = -1$;
        \item $\alpha - \beta \in \Phi$ if and only if $(\alpha, \beta) = 1$.
    \end{enumerate}
\end{lemma}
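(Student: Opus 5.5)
This is the standard characterization of root strings in a simply-laced system. The plan is to derive all four parts from two facts. The first is the Cauchy--Schwarz bound $\abs{(\alpha,\beta)} \le 2$, with equality only when $\alpha = \pm\beta$. The second is integrality: since $\Phi$ is crystallographic and all roots have $(\alpha,\alpha) = 2$, we have $(\alpha,\beta) = \langle \alpha, \beta^\vee \rangle \in \ZZ$. Consequently $(\alpha,\beta) \in \{-2,-1,0,1,2\}$ for all $\alpha, \beta \in \Phi$.

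For (i) and (ii), the implications from left to right are immediate, since $(\alpha,\alpha) = 2$. For the converses, suppose $(\alpha,\beta) = \pm 2$. Then Cauchy--Schwarz holds with equality, so $\beta = c\alpha$ for some scalar $c$. Equal lengths force $c = \pm 1$, and the sign of $(\alpha,\beta)$ determines which. Alternatively, one can compute $(\alpha \mp \beta, \alpha \mp \beta) = 0$ and use positive definiteness, which is available because $\Phi$ is finite and so the form is an inner product.

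For (iii), first suppose $\alpha + \beta \in \Phi$. Expanding,
\begin{equation*}
    2 = (\alpha+\beta,\alpha+\beta) = 4 + 2(\alpha,\beta),
\end{equation*}
so $(\alpha,\beta) = -1$. Conversely, if $(\alpha,\beta) = -1$, then
\begin{equation*}
    s_\beta(\alpha) = \alpha - \langle \alpha, \beta^\vee \rangle \beta = \alpha + \beta,
\end{equation*}
and this lies in $\Phi$ because $\Phi$ is $W$-stable.

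Part (iv) follows by applying (iii) to the pair $\alpha, -\beta$: we have $-\beta \in \Phi$ and $(\alpha,-\beta) = -(\alpha,\beta)$. The only point requiring any care is the integrality and definiteness input in (i) and (ii), so there is no real obstacle.
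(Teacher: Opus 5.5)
Your proposal is correct and follows the paper's proof essentially line for line: Cauchy--Schwarz for (i) and (ii), the norm expansion and the reflection $s_\beta(\alpha) = \alpha + \beta$ for (iii), and (iii) applied to $-\beta$ for (iv). The integrality remark is harmless but is not used in any of the four parts, so it is not actually a point requiring care.
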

\begin{proof}
    (i) and (ii) follow from the Cauchy--Schwarz inequality. (iv) follows by applying (iii) with $-\beta$ instead of $\beta$. Now we show (iii).

    If $\alpha + \beta \in \Phi$, then $2 = (\alpha+\beta, \alpha+\beta) = (\alpha, \alpha) + (\beta, \beta) + 2(\alpha, \beta) = 4 + 2(\alpha, \beta)$, so $(\alpha, \beta) = -1$. If $(\alpha, \beta) = -1$, then $s_\beta(\alpha) = \alpha - \langle \alpha, \beta^\vee \rangle \beta = \alpha - (\alpha, \beta) \beta = \alpha + \beta \in \Phi$.
\end{proof}

\begin{lemma}\label{lem:sum_is_root_implies_weak_le}
    Assume that $\Phi$ is finite, simply-laced. Let $\alpha, \beta \in \Phi^+$ be such that $\alpha + \beta \in \Phi^+$. Then $s_\alpha(\Phi_{s_\beta}) \subseteq \Phi^+$.
\end{lemma}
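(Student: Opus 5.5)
Fix $\gamma \in \Phi_{s_\beta}$, so $\gamma \in \Phi^+$ and $s_\beta(\gamma) \in \Phi^-$. We must show $s_\alpha(\gamma) \in \Phi^+$. Since $s_\beta(\gamma) = \gamma - (\gamma,\beta)\beta$ is negative while $\gamma$ is positive, we get $(\gamma,\beta) > 0$, hence $(\gamma,\beta) \in \{1,2\}$ by \autoref{lem:sum_is_root_iff}. Also $s_\alpha(\gamma) = \gamma - (\gamma,\alpha)\alpha$, and $(\gamma,\alpha) \in \{-2,-1,0,1,2\}$. If $(\gamma,\alpha) \le 0$, then $s_\alpha(\gamma)$ is $\gamma$ plus a nonnegative multiple of $\alpha$, which is a nonnegative combination of simple roots and hence positive. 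If $(\gamma,\alpha) = 2$, then $\gamma = \alpha$. But then $(\alpha,\beta) = (\gamma,\beta) > 0$, contradicting $(\alpha,\beta) = -1$, which holds because $\alpha+\beta \in \Phi$. So the only remaining case is $(\gamma,\alpha) = 1$, where $s_\alpha(\gamma) = \gamma - \alpha$ is a root, and we must show it is positive.

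In that remaining case we split according to $(\gamma,\beta)$. If $(\gamma,\beta) = 2$, then $\gamma = \beta$ and $(\gamma,\alpha) = (\alpha,\beta) = -1$, contradicting $(\gamma,\alpha) = 1$. So $(\gamma,\beta) = 1$, and we have
\begin{equation*}
    (\gamma, \alpha+\beta) = 2,
\end{equation*}
so $\gamma = \alpha+\beta$ by \autoref{lem:sum_is_root_iff}(i). Then $s_\alpha(\gamma) = \gamma - \alpha = \beta \in \Phi^+$, which finishes the proof.

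The main obstacle is only the bookkeeping of the finitely many values of the inner products. The key observation is that the single troublesome case $(\gamma,\alpha)=(\gamma,\beta)=1$ forces $\gamma = \alpha+\beta$, using that all roots have squared length $2$.
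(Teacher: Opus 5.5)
Your proof is correct and is essentially the paper's argument. The paper invokes Dyer's Lemma 4.1(vi) to reduce to $\Phi_{s_\alpha}\cap\Phi_{s_\beta}=\emptyset$, which is exactly your claim that $s_\alpha$ sends no $\gamma\in\Phi_{s_\beta}$ into $\Phi^-$; it then rules out $\gamma=\alpha,\beta,\alpha+\beta$ (``easily checked''), which your case analysis spells out, and uses the same key step that $(\gamma,\alpha)=(\gamma,\beta)=1$ forces $\gamma=\alpha+\beta$.
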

\begin{proof}
    By \cite[Lemma 4.1 (vi)]{MR3943754}, it suffices to show that $\Phi_{s_\alpha} \cap \Phi_{s_\beta} = \emptyset$. We can easily check that $\alpha, \beta, \alpha + \beta \notin \Phi_{s_\alpha} \cap \Phi_{s_\beta}$. Let $\gamma \in \Phi_{s_\alpha} \cap \Phi_{s_\beta}$. Then $(\gamma, \alpha) = 1$ and $(\gamma, \beta) = 1$, so $(\gamma, \alpha+\beta) = 2$, hence $\gamma = \alpha + \beta$, contradiction.
\end{proof}

We remark that in the simply-laced case, a subset $A \subseteq \Phi^+$ is $2$-closed if and only if for all $\alpha, \beta \in A$, $\alpha + \beta \in \Phi^+$ implies $\alpha + \beta \in A$. This can be seen from the fact that any rank $2$ subsystem of a simply-laced root system is either $A_1 \times A_1$ or $A_2$.

For $\alpha \in \Phi^+ \cup \{0\}$, we let $\Ht(\alpha)$ denote its \emph{height}. If $\alpha = \sum_{\beta \in \Delta} c_\beta \beta$, then by definition $\Ht(\alpha) = \sum_{\beta \in \Delta} c_\beta \in \ZZ_{\ge 0}$.

For $A \subseteq \Phi^+$, we define the relation $\to_A$ on $\Phi^+ \cup \{0\}$ by
\begin{equation*}
    \alpha \to_A \beta \qquad \text{if and only if} \qquad \alpha - \beta \in A.
\end{equation*}
We let $>_A$ denote the transitive closure of $\to_A$. Note that if $\alpha >_A \beta$, then $\Ht(\alpha) > \Ht(\beta)$. It follows that

\begin{lemma}
    For any $A \subseteq \Phi^+$, $>_A$ is a strict partial order on $\Phi^+ \cup \{0\}$.
\end{lemma}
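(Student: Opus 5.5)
The plan is to verify the two axioms of a strict partial order, irreflexivity and transitivity, on $\Phi^+ \cup \{0\}$. Transitivity needs no argument, since $>_A$ is defined as the transitive closure of $\to_A$. So everything reduces to irreflexivity, which I will get from the height function $\Ht$ as a strictly monotone invariant.

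First, I will show that a single step strictly decreases height. Suppose $\alpha \to_A \beta$, so that $\alpha - \beta \in A \subseteq \Phi^+$. Since $\Ht$ is additive on $\Phi^+ \cup \{0\}$ (it is the restriction of the linear functional summing simple-root coefficients), we get
\[
\Ht(\alpha) - \Ht(\beta) = \Ht(\alpha - \beta).
\]
The right-hand side is at least $1$, because $\alpha - \beta$ is a positive root: its coefficients are nonnegative integers, not all zero.

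Next, I will extend this to the transitive closure. If $\alpha >_A \beta$, there is a chain
\[
\alpha = \gamma_0 \to_A \gamma_1 \to_A \cdots \to_A \gamma_k = \beta
\]
with $k \ge 1$. Summing the single-step inequalities along the chain gives $\Ht(\alpha) \ge \Ht(\beta) + k > \Ht(\beta)$. In particular $\alpha >_A \alpha$ is impossible, so $>_A$ is irreflexive. Asymmetry follows in the same way, although it is implied by irreflexivity together with transitivity anyway.

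I expect no real obstacle here. The only point needing care is that $\Ht$ is well defined and linear on $\Phi^+ \cup \{0\}$. This holds because every root is an integer combination of the simple roots $\Delta$ with coefficients all of one sign, and $\Delta$ is linearly independent (the crystallographic setting of this subsection). Linear independence makes the coefficients unique, so $\Ht(\alpha - \beta) = \Ht(\alpha) - \Ht(\beta)$ holds whenever all three vectors lie in $\Phi^+ \cup \{0\}$.
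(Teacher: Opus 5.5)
Your proposal is correct and matches the paper, which observes that $\alpha >_A \beta$ implies $\Ht(\alpha) > \Ht(\beta)$ and deduces the lemma from this. Your writeup just spells out that each step $\to_A$ lowers height by $\Ht(\alpha-\beta) \ge 1$ and that transitivity holds by definition of the transitive closure.
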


In this setting, $2$-closures have the following explicit description:
\begin{theorem}\label{thm:2_closure_simply_laced}
    Assume that $\Phi$ is finite, simply-laced. Then for $A \subseteq \Phi^+$,
    \begin{equation*}
        \overline{A} = \{\beta \in \Phi^+ \mid \beta >_A 0\}.
    \end{equation*}
\end{theorem}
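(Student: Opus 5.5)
The plan is to prove the two inclusions separately. Write $B = \{\beta \in \Phi^+ \mid \beta >_A 0\}$. Unwinding the definition, $\beta \in B$ means there is a chain
\begin{equation*}
\beta = \gamma_k \to_A \gamma_{k-1} \to_A \cdots \to_A \gamma_0 = 0
\end{equation*}
with $k \ge 1$, each $\gamma_i \in \Phi^+$ for $i \ge 1$, and each difference $\gamma_i - \gamma_{i-1} \in A$. Call the minimal such $k$ the \emph{length} of $\beta$. Throughout I use the remark after \autoref{lem:sum_is_root_implies_weak_le}: in the simply-laced case, $2$-closedness only needs to be checked for sums $\alpha + \beta \in \Phi^+$.

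\textbf{Step 1: $B \subseteq \overline{A}$.} I induct on the length $k$. If $k = 1$, then $\beta \in A \subseteq \overline{A}$. If $k > 1$, then $\beta = \gamma_{k-1} + a$ with $a \in A$ and $\gamma_{k-1} \in B$ of smaller length. By induction $\gamma_{k-1} \in \overline{A}$, and $\beta \in \Phi^+$. Since $\overline{A}$ is $2$-closed, $\beta \in \overline{A}$.

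\textbf{Step 2: $\overline{A} \subseteq B$.} Each $a \in A$ satisfies $a \to_A 0$, so $A \subseteq B$. It therefore suffices to show that $B$ is $2$-closed. I will prove the following claim by induction on the length of $\gamma$, with $\beta$ arbitrary.

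\emph{Claim.} If $\beta, \gamma \in B$ and $\beta + \gamma \in \Phi^+$, then $\beta + \gamma \in B$.

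\emph{Base case.} If $\gamma \in A$, then $\beta + \gamma \to_A \beta >_A 0$.

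\emph{Inductive step.} Otherwise write $\gamma = \gamma' + a$ with $a \in A$ and $\gamma' \in B$ of shorter length. There are two cases.

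\emph{Case (a): $\beta + \gamma' \in \Phi$.} This root is positive, so the induction hypothesis gives $\beta + \gamma' \in B$. Then $\beta + \gamma \to_A \beta + \gamma'$, hence $\beta + \gamma \in B$.

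\emph{Case (b): $\beta + \gamma' \notin \Phi$.} This is the step needing care, and I use \autoref{lem:sum_is_root_iff}. Since $\beta + \gamma \in \Phi$, we have $(\beta, \gamma') + (\beta, a) = (\beta, \gamma) = -1$. The root $\gamma'$ is positive, so $\gamma' \neq -\beta$, and $\beta + \gamma' \notin \Phi$ gives $(\beta, \gamma') \ne -1$. Hence $(\beta, \gamma') \in \{0, 1, 2\}$.
\begin{itemize}
\item If $(\beta, \gamma') = 2$, then $\gamma' = \beta$ and $(\beta, a) = -3$, which is impossible.
\item If $(\beta, \gamma') = 1$, then $(\beta, a) = -2$, so $a = -\beta$. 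This contradicts positivity of both roots.
\end{itemize}
So $(\beta, \gamma') = 0$ and $(\beta, a) = -1$, which means $\beta + a \in \Phi^+$. Since $\beta + a \to_A \beta$, we get $\beta + a \in B$. Now $\beta + \gamma = (\beta + a) + \gamma'$ is a positive root, $\gamma'$ has shorter length than $\gamma$, and $\beta + a \in B$. So the induction hypothesis, applied with $\beta + a$ as the first argument, gives $\beta + \gamma \in B$.

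The main obstacle is exactly Case (b), where the last step of the chain for $\gamma$ cannot be peeled off directly. The fix is to absorb $a$ into $\beta$ instead. This is why the induction must be on the length of the second argument only, with the first argument allowed to vary. The inner-product computation is what guarantees that $\beta + a$ is a root in this case.
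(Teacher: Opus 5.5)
Your proof is correct and follows essentially the same route as the paper. The paper also inducts on the summand being decomposed, using height instead of chain length, and in the hard case it absorbs the last step $a$ into the other summand exactly as in your Case (b). Your split on whether $\beta+\gamma'$ is a root corresponds to the paper's split on the inner product of the other summand with $a$. Your version also automatically excludes the paper's case $(\beta',\alpha)=1$, which is in fact vacuous because it forces $\beta'=-\beta_2$.
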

The set on the right-hand side generalizes the transitive closure of \cite{MR1267688}.

\begin{proof}[Proof of \autoref{thm:2_closure_simply_laced}]
    The inclusion $\{\beta \in \Phi^+ \mid \beta >_A 0\} \subseteq \overline{A}$ follows easily from induction on $\Ht(\beta)$ and the fact that $\overline{A}$ is $2$-closed.

    For the reverse inclusion, it suffices to show that if $\beta, \beta' >_A 0$ and $\beta + \beta' \in \Phi^+$, then $\beta + \beta' >_A 0$. We proceed by strong induction on $\Ht(\beta)$.

    If $\beta \in A$, then $\beta + \beta' \to_A \beta' >_A 0$ shows that $\beta + \beta' >_A 0$.

    Now suppose that $\beta \notin A$, so there is some $\beta_2 \in \Phi^+$ with $\beta \to_A \beta_2 >_A 0$. Note that $\Ht(\beta) > \Ht(\beta_2)$. Let $\alpha = \beta - \beta_2 \in A$. We claim that $\beta' \neq \alpha$. Indeed, by \autoref{lem:sum_is_root_iff}, we have $(\beta, \beta') = -1$ and $(\beta, \alpha) = 1$. Thus, $(\beta', \alpha) \in \{-1, 0, 1\}$.

    If $(\beta', \alpha) = 1$, then $(\beta + \beta', \alpha) = 2$, so $\beta + \beta' = \alpha \to_A 0$.

    If $(\beta', \alpha) = 0$, then $(\beta + \beta', \alpha) = 1$, so $\beta + \beta' - \alpha \in \Phi$, but we also have $\beta + \beta' - \alpha = \beta_2 + \beta'$, hence $\beta + \beta' - \alpha \in \Phi^+$. By the induction hypothesis, we have $\beta_2 + \beta' >_A 0$. Then $\beta + \beta' \to_A \beta_2 + \beta' >_A 0$.

    If $(\beta', \alpha) = -1$, then $\alpha + \beta' \in \Phi^+$. Since $\alpha \in A$, we have $\alpha + \beta' >_A 0$. By the induction hypothesis, we have $\beta + \beta' = \beta_2 + (\alpha + \beta') >_A 0$.
\end{proof}

\autoref{thm:simply_laced_pbr_eq_2_closure} follows from \autoref{thm:pbr_subseteq_2_closure_root} and
\begin{lemma}
    Assume that $\Phi$ is finite, simply-laced. Then for $A \subseteq \Phi^+$, we have $\overline{A} \subseteq \overline{A}^\pbr$.
\end{lemma}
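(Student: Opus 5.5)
The plan is to use \autoref{thm:2_closure_simply_laced} to reduce the statement to an induction along the relation $>_A$. By that theorem, every $\beta \in \overline{A}$ satisfies $\beta >_A 0$. I will show, by strong induction on $\Ht(\beta)$, that every $\beta \in \Phi^+$ with $\beta >_A 0$ has $s_\beta \in \overline{A}^\pbr$ (identifying roots and reflections as in \autoref{sec:pbr}). Throughout, I will check the length condition via \autoref{lem:incr_len_iff}: a palindromic word $t_1 \ldots t_{2l-1}$ equal to $t$ satisfies \eqref{eqn:incr_len} for all $0 \le r \le 2l-2$ if and only if $t_1 \ldots t_r(\alpha_{r+1}) \in \Phi_t$ for $0 \le r \le l-1$.

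\emph{Base case.} If $\beta \in A$, the one-letter word $s_\beta$ works.

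\emph{Inductive step.} Otherwise $\beta \to_A \beta_2 >_A 0$ with $\beta_2 \in \Phi^+$, and $\alpha := \beta - \beta_2 \in A$. Since $\Ht(\beta_2) < \Ht(\beta)$, the induction hypothesis gives $t_1, \ldots, t_l \in A$ forming a valid palindromic word for $s_{\beta_2}$. By \autoref{lem:sum_is_root_iff}, $\alpha + \beta_2 = \beta \in \Phi$ forces $(\alpha, \beta_2) = -1$, so $s_\alpha(\beta_2) = \beta_2 + \alpha = \beta$. Hence
\begin{equation*}
    s_\beta = s_\alpha s_{\beta_2} s_\alpha = s_\alpha t_1 \ldots t_{2l-1} s_\alpha,
\end{equation*}
which is again a palindromic word, now of length $2l+1$, with all letters in $A$.

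It remains to verify condition (ii) of \autoref{lem:incr_len_iff} for the new word with $t = s_\beta$.
\begin{enumerate}[label=(\alph*)]
    \item For the first letter we need $\alpha \in \Phi_{s_\beta}$. This holds because $\alpha \in \Phi^+$ and $(\alpha, \beta) = (\alpha, \alpha) + (\alpha, \beta_2) = 1 > 0$, so $s_\beta(\alpha) = \alpha - \beta = -\beta_2 \in \Phi^-$.
    \item For the later letters we need $s_\alpha(\gamma) \in \Phi_{s_\beta}$ whenever $\gamma = t_1 \ldots t_r(\alpha_{r+1}) \in \Phi_{s_{\beta_2}}$, which is what the induction hypothesis gives. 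So it suffices to show $s_\alpha(\Phi_{s_{\beta_2}}) \subseteq \Phi_{s_\beta}$.
\end{enumerate}

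For (b), apply \autoref{lem:sum_is_root_implies_weak_le} to the pair $\alpha, \beta_2$; this is allowed since $\alpha + \beta_2 = \beta \in \Phi^+$. It gives $s_\alpha(\Phi_{s_{\beta_2}}) \subseteq \Phi^+$. Now take $\gamma \in \Phi_{s_{\beta_2}}$. Then
\begin{equation*}
    s_\beta s_\alpha(\gamma) = s_\alpha s_{\beta_2}(\gamma) = -s_\alpha\bigl(-s_{\beta_2}(\gamma)\bigr).
\end{equation*}
Since $s_{\beta_2}$ is an involution, $-s_{\beta_2}(\gamma) \in \Phi^+ \cap s_{\beta_2}(\Phi^-) = \Phi_{s_{\beta_2}}$. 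Applying the same inclusion again, $s_\alpha\bigl(-s_{\beta_2}(\gamma)\bigr) \in \Phi^+$, so $s_\beta s_\alpha(\gamma) \in \Phi^-$. Together with $s_\alpha(\gamma) \in \Phi^+$, this gives $s_\alpha(\gamma) \in \Phi_{s_\beta}$, as required.

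The step I expect to be the main obstacle is (b): showing that conjugating by $s_\alpha$ carries the length-increasing certificate for $s_{\beta_2}$ to one for $s_\beta$. The argument above handles it in two moves. First, the inversion-set criterion of \autoref{lem:incr_len_iff} turns the length conditions into the root-membership condition $s_\alpha(\Phi_{s_{\beta_2}}) \subseteq \Phi_{s_\beta}$. Second, \autoref{lem:sum_is_root_implies_weak_le} supplies exactly the positivity needed for that membership.
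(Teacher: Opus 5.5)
Your proof is correct and is essentially the paper's argument. As there, you conjugate a certifying palindromic word for $s_{\beta_2}$ by $s_\alpha$, check $\alpha \in \Phi_{s_\beta}$ via $s_\beta(\alpha) = -\beta_2$, and obtain $s_\alpha(\Phi_{s_{\beta_2}}) \subseteq \Phi_{s_\beta}$ from \autoref{lem:sum_is_root_implies_weak_le} together with $s_\beta s_\alpha = s_\alpha s_{\beta_2}$. The only difference is bookkeeping: the paper fixes a chain $\beta = \beta_1 \to_A \cdots \to_A \beta_l \to_A 0$ and inducts on its length, building an explicit word from the chain, whereas you induct on $\Ht(\beta)$ and accept any certifying word for $s_{\beta_2}$.
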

\begin{proof}
    Let $\beta \in \overline{A}$. By \autoref{thm:2_closure_simply_laced}, there is a chain $\beta = \beta_1 \to_A \beta_2 \to_A \ldots \to_A \beta_l \to_A 0$. For $1 \le r \le l-1$, set $\alpha_r = \beta_r - \beta_{r+1} \in A$, and set $\alpha_l = \beta_l \in A$. For $1 \le r \le l$, set $t_r = s_{\alpha_r}$.

    We show by induction on $l$ that in this setup, we have
    \begin{enumerate}[label=(\roman*)]
        \item $\beta = t_1 \ldots t_{l-1}(\alpha_l)$; and
        \item for $0 \le r \le l-1$, $t_1 \ldots t_r(\alpha_{r+1}) \in \Phi_{s_\beta}$.
    \end{enumerate}
    This suffices by \autoref{lem:incr_len_iff}.

    For $l = 1$, we have $\alpha_1 = \beta_1 = \beta \in \Phi_{s_\beta}$, hence (i) and (ii) hold.

    Now suppose that $l > 1$. By induction, we may assume that
    \begin{enumerate}[label=(\roman*')]
        \item $\beta_2 = t_2 \ldots t_{l-1}(\alpha_l)$; and
        \item for $1 \le r \le l-1$, $t_2 \ldots t_r(\alpha_{r+1}) \in \Phi_{s_{\beta_2}}$.
    \end{enumerate}
    Then by (i'), $t_1 t_2 \ldots t_{l-1}(\alpha_l) = s_{\alpha_1}(\beta_2) = \beta_1 = \beta$, hence (i).

    We compute $s_\beta(\alpha_1) = -\beta_2$, so $\alpha_1 \in \Phi_{s_\beta}$. Let $1 \le r \le l-1$. We need to show that $t_1 \ldots t_r(\alpha_{r+1}) \in \Phi_{s_\beta}$.

    By \autoref{lem:sum_is_root_implies_weak_le}, $s_{\alpha_1}(\Phi_{s_{\beta_2}}) \subseteq \Phi^+$, so $t_1 \ldots t_r(\alpha_{r+1}) \in \Phi^+$ by (ii'). Furthermore,
    \begin{equation*}
        s_\beta t_1 t_2 \ldots t_r(\alpha_{r+1}) = t_1 s_{\beta_2} t_2 \ldots t_r(\alpha_{r+1}) = -s_{\alpha_1} (-s_{\beta_2} t_2 \ldots t_r(\alpha_{r+1})).
    \end{equation*}
    We have $-s_{\beta_2} t_2 \ldots t_r(\alpha_{r+1}) \in \Phi_{s_{\beta_2}}$ by (ii'), so $s_{\alpha_1} (-s_{\beta_2} t_2 \ldots t_r(\alpha_{r+1})) \in \Phi^+$. We conclude that $t_1 \ldots t_r(\alpha_{r+1}) \in \Phi_{s_\beta}$, hence (ii) holds as required.
\end{proof}

\section{Further directions}

The complete set of relations between the various closures in general is still not entirely clear. We initially conjectured the following:
\begin{conjecture}\label{conj:br_of_2_closed}
    For any $A \subseteq T$, we have $\overline{A}^\br \subseteq \overline{A}$.
\end{conjecture}
This is a strengthening of \autoref{thm:pbr_subseteq_2_closure} and \cite[Theorem 4.21]{dermenjian2025bruhatpreclosure}.

In \cite{dermenjian2025bruhatpreclosure}, Dermenjian made the following conjecture, which he has already proved in type $A$:
\begin{conjecture}[\protect{\cite[Conjecture 3.4]{dermenjian2025bruhatpreclosure}}]
    Bruhat preclosure is a closure operator for Coxeter groups of type $A$, $D$, or $E$.
\end{conjecture}

We can prove this assuming \autoref{conj:br_of_2_closed}, by showing that $2$-closure, Bruhat preclosure, and palindromic Bruhat preclosure are the same operation. Let $A \subseteq T$. Then
\begin{equation*}
    \overline{A} = \overline{A}^\pbr \subseteq \overline{A}^\br \subseteq \overline{A},
\end{equation*}
where the first equality is \autoref{thm:simply_laced_pbr_eq_2_closure}, and the second inclusion is \autoref{conj:br_of_2_closed}.

At present, the proof of Dyer's conjecture is done case-by-case. For a uniform proof of Dyer's conjecture for finite Coxeter groups, we propose the following for an arbitrary finite Coxeter group $W$:
\begin{conjecture}
    For $u, w \in W$, $\overline{\Phi_u \cup \Phi_w}^\pbr$ is $2$-closed.
\end{conjecture}

\section*{Acknowledgments}

The author learned about Dyer's conjecture from Christophe Hohlweg at the CIME school on Coxeter group theory and representation theory, held in Cetraro, Italy. The author is grateful for the school's hospitality. The author thanks Thomas Lam for enabling the author's Claude usage via Claude Team plan for scientists, and Charlotte Chan for continued support.

\section*{AI usage disclosure}

We used Gemini 3.1 Pro and Claude Opus 5 during this research, including literature search and proofreading. The proof of \autoref{thm:dyer_simply_laced} via results in \autoref{sec:pbr} was developed in close collaboration with Claude, who was able to supply proofs of many technical results under the author's direction. The main text is written entirely by the author, who takes full responsibility for the correctness of its results. We have made an effort to simplify many of the statements and arguments.

\appendix
\refstepcounter{section}
\phantomsection
\renewcommand{\thesubsection}{\Alph{subsection}}

\section*{Appendix: The join algorithm in classical types}\label{sec:joins_classical}

In the following three subsections, we specialize our algorithm in \autoref{subsec:algorithm} to types $A$, $B$, and $D$ in order. These subsections will have largely the same structure. For types $A$ and $B$, we recover the algorithms of Markowsky and Biagioli--Perrone respectively, as such we will be brief in the corresponding subsections. For type $D$, our algorithm is new.

For a finite Coxeter group $W$, we let $T_L(w) = \{t \in T \mid \ell(t w) < \ell(w) \}$ for $w \in W$. $\Phi_w$ and $T_L(w)$ are related by $T_L(w) = \{s_\alpha \mid \alpha \in \Phi_w\}$, where $s_\alpha \in T$ is the reflection with respect to $\alpha \in \Phi$.

\subsection*{Type A: Markowsky's algorithm}\label{subsec:algorithm_type_a}

Fix $n \ge 2$. Let $S_n$ denote the symmetric group on $[n] = \{1,2,\ldots,n\}$, which is a Coxeter group with generators $S = \{s_i \mid 1 \le i \le n-1\}$, where $s_i = (i \,\, i+1)$. The subset of reflections in $S_n$ is
\begin{equation*}
    T_n = \{(i \,\, j) \in S_n \mid 1 \le i < j \le n\}.
\end{equation*}
Set $I = \{s_i \mid 1 \le i \le n-2\}$, so that $(S_n)_I = S_{n-1}$.

\begin{lemma*}
    For $w \in S_n$, $\proj^S_I(w) \in S_{n-1}$ is the permutation obtained by removing $n$ from the one-line notation of $w$.
\end{lemma*}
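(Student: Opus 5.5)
The plan is to write down the decomposition $w = w_I \, \leftindex^I w$ explicitly and recognize $w_I$ in one-line notation. Let $u \in S_{n-1}$ be the permutation whose one-line notation is that of $w$ with the entry $n$ deleted; this is a word in $[n-1]$, so indeed $u \in S_{n-1} = (S_n)_I$. We must show $u = w_I$. By uniqueness of the factorization $w = w_I \, \leftindex^I w$ with $w_I \in W_I$ and $\leftindex^I w \in \leftindex^I W$, it suffices to check two things: first, that $u^{-1} w \in \leftindex^I W$; and second, that $u^{-1}w$ times $u$ in the appropriate order recovers $w$. The second is a tautology.

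For the first, I will use the convention that permutations act on values, so $s_i w$ swaps the values $i$ and $i+1$ in the one-line notation of $w$. Then $\ell(s_i v) > \ell(v)$ holds iff $i$ appears to the left of $i+1$ in the one-line notation of $v$. Hence $\leftindex^I W$ consists of those $v \in S_n$ in which $1, 2, \ldots, n-1$ appear in increasing order from left to right, with $n$ inserted somewhere. Set $v = u^{-1} w$. Left multiplication by $u^{-1}$ relabels values by $i \mapsto u^{-1}(i)$, where $u$ is extended to fix $n$. In the one-line notation of $w$, the values $\neq n$ read in order are $u(1), \ldots, u(n-1)$. After relabeling they become $1, \ldots, n-1$ in order, and $n$ stays in its original position. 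So $v \in \leftindex^I W$, and $w = u v$ with $u \in S_{n-1}$. This gives $\proj^S_I(w) = u$.

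Alternatively, and as a cross-check, I could use \autoref{lem:Phi_projection}: $\Phi_{w_I} = \Phi_w \cap \Phi_I^+$. Here $\Phi_I^+$ corresponds to the transpositions $(i\,j)$ with $i<j\le n-1$. Also $T_L(w)$ consists of the value pairs $i<j$ that appear inverted in $w$. The value-inversions of $w$ among $[n-1]$ are exactly the inversions of $u$, so $\Phi_u = \Phi_{w_I}$, and therefore $u = w_I$.

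The only real obstacle is bookkeeping conventions. I need to confirm that the left (value) action matches the paper's choice of left cosets $w = w_I \, \leftindex^I w$ and left descents defining $\leftindex^I W$. I also need to confirm the identification $T_L(w) = \{s_\alpha \mid \alpha\in\Phi_w\}$ with value inversions, as opposed to position inversions. I would fix these on a small example such as $n=3$ before writing the argument.
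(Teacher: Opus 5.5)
Your main argument is correct, but it takes a different route from the paper. The paper's proof is essentially your ``cross-check''. It invokes \autoref{lem:Phi_projection}, so it only needs $T_L(w') = T_L(w) \cap T_{n-1}$, i.e.\ that the value-inversions of $w$ among $[n-1]$ are exactly the inversions of $w'$. It then uses implicitly that an element is determined by its inversion set. You instead exhibit the factorization $w = u \cdot (u^{-1}w)$ directly. You verify $u^{-1}w \in \leftindex^I W$ by describing $\leftindex^I W$ as the permutations in which $1,\ldots,n-1$ appear in increasing order, and then appeal to uniqueness of the parabolic factorization.

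Your route is more elementary. It uses only the definition of $\proj^S_I$ and the description of left descents in $S_n$, not the root-theoretic \autoref{lem:Phi_projection}, which rests on Dyer's Lemma 4.1(vi). It also identifies $\leftindex^I(S_n)$ explicitly, and that immediately gives the paper's description of the fiber as the chain obtained by inserting $n$ into $w_1 \cdots w_{n-1}$, which the paper derives separately. The paper's route is more uniform: the same check $T_L(w') = T_L(w) \cap T_{n-1}$ is reused verbatim in types $B$ and $D$. There, writing down $\leftindex^I W$ and the coset factorization explicitly would take more bookkeeping, e.g.\ the sign flip of the middle entries in type $D$.

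Your convention concerns are settled by the standard convention of composing permutations as functions. Under it, $s_i w$ swaps the values $i, i+1$ and $T_L(w)$ is the set of value inversions, exactly as you assumed, and this matches the paper's stated fibers.
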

\begin{proof}
    Let $w' \in S_{n-1}$ be the permutation obtained by removing $n$ from the one-line notation of $w$. By \autoref{lem:Phi_projection}, it suffices to show that $T_L(w') = T_L(w) \cap T_{n-1}$, which is clear.
\end{proof}

We now describe the fibers of $\proj^S_I$. A simple calculation and \autoref{rem:chain_iff} shows that for $w \in S_{n-1}$, the fiber $(\proj^S_I)^{-1}(\{w\}) = w \leftindex^I (S_n)$ is a chain in $S_n$. In fact, it consists of the $n$ elements
\begin{equation*}
    w_1 w_2 \cdots w_{n-1} n \le w_1 w_2 \cdots w_{n-2} n w_{n-1} \le \ldots \le n w_1 w_2 \cdots w_{n-1}
\end{equation*}
in one-line notation, where $w_i = w(i)$. In particular, it is linearly ordered by decreasing position of $n$. The following is a restatement of \autoref{lem:le_join_iff} in our setup:

\begin{lemma*}
    Let $u, w \in S_n$ and set $j' = \proj^S_I(u) \vee \proj^S_I(w)$. For $j \in (\proj^S_I)^{-1}(\{j'\})$, we have $u \vee w \le j$ if and only if $(T_L(u) \cup T_L(w)) \setminus T_{n-1} \subseteq T_L(j) \setminus T_{n-1}$.
\end{lemma*}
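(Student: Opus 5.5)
This lemma is the specialization of \autoref{lem:le_join_iff} to $W = S_n$ and $I = \{s_1, \ldots, s_{n-2}\}$, so the plan is to translate that lemma from positive roots to reflections. The dictionary is the bijection $\Phi^+ \to T_n$, $\alpha \mapsto s_\alpha$. Under this bijection:
\begin{enumerate}[label=(\roman*)]
    \item $\Phi_w$ corresponds to $T_L(w)$ for every $w$, as recorded at the start of the appendix;
    \item $\Phi_I^+$ corresponds to $T_{n-1}$, the reflections of the parabolic subgroup $(S_n)_I = S_{n-1}$.
\end{enumerate}

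For (ii), I would use the standard realization of the type $A_{n-1}$ root system. Take $\Delta = \{e_i - e_{i+1}\}$, so that $\Phi^+ = \{e_i - e_j \mid i < j\}$ and $s_{e_i - e_j} = (i \,\, j)$. Then $\Phi_I = \Phi \cap \Span\{e_i - e_{i+1} \mid i \le n-2\}$ consists exactly of the roots $e_i - e_j$ with $i, j \le n-1$. Hence $\Phi_I^+$ corresponds to $\{(i \,\, j) \mid 1 \le i < j \le n-1\} = T_{n-1}$. Alternatively, one can cite the general fact that the reflections of $W_I$ are precisely the $s_\alpha$ with $\alpha \in \Phi_I^+$.

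Since $\alpha \mapsto s_\alpha$ is a bijection, it commutes with unions, set differences and inclusions. Therefore the condition $(\Phi_u \cup \Phi_w) \setminus \Phi_I^+ \subseteq \Phi_j \setminus \Phi_I^+$ in \autoref{lem:le_join_iff} is equivalent to $(T_L(u) \cup T_L(w)) \setminus T_{n-1} \subseteq T_L(j) \setminus T_{n-1}$. Finally, $j'$ and the fiber $(\proj^S_I)^{-1}(\{j'\})$ are literally the same objects in both statements, so the lemma follows at once. There is no real obstacle here; the only point needing care is the identification of $\Phi_I^+$ with $T_{n-1}$ in step (ii).
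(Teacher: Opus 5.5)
Your proposal is correct and takes the paper's approach. The paper presents this lemma as a direct restatement of \autoref{lem:le_join_iff} for $W = S_n$, using the correspondence $\Phi_w \leftrightarrow T_L(w)$ from the start of the appendix together with $\Phi_I^+ \leftrightarrow T_{n-1}$; your check of the second identification via $\Phi^+ = \{e_i - e_j \mid i < j\}$ supplies the detail the paper leaves implicit.
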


Thus to obtain the join $j = u \vee w$, we insert $n$ into $j' = \proj^S_I(u) \vee \proj^S_I(w)$ at the rightmost possible position such that the condition of the above lemma holds. This condition says that if $1 \le k \le n-1$ is such that $(k \,\, n) \in T_L(u) \cup T_L(w)$, then $(k \,\, n) \in T_L(j)$. In other words, if $k$ occurs to the right of $n$ in the one-line notation of $u$ or $w$, then it also occurs to the right of $n$ in $j$. This gives precisely the condition in \cite[Theorem 2]{MR1267688}, and a proof of its correctness.

\subsection*{Type B: the Biagioli--Perrone algorithm}\label{subsec:algorithm_type_b}

Fix $n \ge 2$. Let $S^B_n$ denote the signed permutation group on $[n]$, which is a Coxeter group with generators $S = \{s_i \mid 0 \le i \le n-1 \}$, where $s_0 = (\overline{1} \,\, 1)$, and for $1 \le i \le n-1$, $s_i = (\overline{i+1} \,\, \overline{i}) (i \,\, i+1)$. Here, we write $\overline{i}$ for $-i$. We refer the reader to \cite[Section 8.1]{bjorner2005combinatorics} for more details. The subset of reflections in $S^B_n$ is
\begin{equation*}
    T^B_n = \{(\overline{j} \,\, \overline{i})(i \,\, j) \in S^B_n \mid 1 \le \abs{i} < j \le n \} \cup \{(\overline{i} \,\, i) \mid 1 \le i \le n \}.
\end{equation*}
Set $I = \{s_i \mid 0 \le i \le n-2\}$, so that $(S^B_n)_I = S^B_{n-1}$.

\begin{lemma*}
    For $w \in S^B_n$, $\proj^S_I(w) \in S^B_{n-1}$ is the signed permutation obtained by removing $n$ and $\overline{n}$ from the complete notation of $w$.
\end{lemma*}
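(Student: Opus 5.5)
Let $w' \in S^B_{n-1}$ denote the signed permutation obtained by deleting $n$ and $\overline{n}$ from the complete notation $w(\overline{n}) \cdots w(\overline{1}) \mid w(1) \cdots w(n)$ of $w$. Because the complete notation is antisymmetric under $i \mapsto \overline{i}$ with reversal, this deletion yields a valid complete notation of an element of $S^B_{n-1}$. By \autoref{lem:Phi_projection}, and using that $\Phi_I^+$ corresponds to the reflections $T^B_{n-1}$, it suffices to prove
\begin{equation*}
    T_L(w') = T_L(w) \cap T^B_{n-1}.
\end{equation*}
Then $w'$ and $\proj^S_I(w)$ have equal inversion sets, so they are equal, since an element is determined by its inversion set.

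The key step is a combinatorial description of $T_L(w)$ in terms of the complete notation. With the generators above and the left action, $t \in T_L(w)$ means $\ell(tw) < \ell(w)$. Multiplying by $t$ on the left permutes values, so this is governed by the relative positions of values in the one-line or complete notation. I will use the standard description from \cite[Section 8.1]{bjorner2005combinatorics}, stated in terms of positions of values:
\begin{itemize}
    \item $(\overline{i}\,\, i) \in T_L(w)$ exactly when $\overline{i}$ appears to the right of $i$ in the complete notation, that is, $w^{-1}(i) < 0$;
    \item $(\overline{j}\,\,\overline{i})(i\,\, j)$ with $\abs{i} < j$ lies in $T_L(w)$ exactly when $j$ appears to the left of $i$ in the complete notation, that is, $w^{-1}(j) < w^{-1}(i)$ with the convention $w^{-1}(\overline{k}) = -w^{-1}(k)$.
\end{itemize}
I will double-check this against the type $A$ case, where $(k\,\,n) \in T_L$ iff $k$ lies to the right of $n$. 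These conditions should be equivalent to $w^{-1}$ inverting the corresponding positive root; equivalently, the left inversions of $w$ are the right inversions of $w^{-1}$, read off via \cite[Proposition 8.1.1]{bjorner2005combinatorics}.

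Given this description, the comparison is immediate. Every reflection in $T^B_{n-1}$ involves only the values $\pm 1, \ldots, \pm(n-1)$. Membership of such a reflection in $T_L$ depends only on the relative order of those values in the complete notation, and deleting $n$ and $\overline{n}$ does not change that relative order. Hence $T_L(w) \cap T^B_{n-1} = T_L(w')$.

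I expect the main obstacle to be the bookkeeping in the description of $T_L$, specifically confirming that the conventions (left versus right inversions, value positions in the complete notation) match the paper's choices of $s_0 = (\overline{1}\,\,1)$ and the left action. Once the description is correctly stated in terms of relative positions of values, the lemma follows exactly as in type $A$.
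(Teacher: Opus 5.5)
Your proposal is correct and follows the paper's proof: both reduce, via \autoref{lem:Phi_projection}, to checking $T_L(w') = T_L(w) \cap T^B_{n-1}$, which the paper simply calls clear. Your description of left inversions is correct under the paper's conventions: for $\abs{i} < j$, $(\overline{j}\,\,\overline{i})(i\,\,j) \in T_L(w)$ iff $w^{-1}(j) < w^{-1}(i)$, and $(\overline{i}\,\,i) \in T_L(w)$ iff $w^{-1}(i) < 0$. Since deleting $\pm n$ preserves the relative order of the remaining values in the complete notation, this is exactly what justifies the step the paper leaves as clear.
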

\begin{proof}
    Let $w' \in S^B_{n-1}$ be the signed permutation obtained by removing $n$ and $\overline{n}$ from the complete notation of $w$. By \autoref{lem:Phi_projection}, it suffices to show that $T_L(w') = T_L(w) \cap T^B_{n-1}$, which is clear.
\end{proof}

We now describe the fibers of $\proj^S_I$. A simple calculation and \autoref{rem:chain_iff} shows that for $w \in S^B_{n-1}$, the fiber $(\proj^S_I)^{-1}(\{w\}) = w \leftindex^I (S^B_n)$ is a chain in $S^B_n$. In fact, it consists of the $2n$ elements
\begin{align*}
    &w_1 w_2 \cdots w_{n-1} n \le w_1 w_2 \cdots w_{n-2} n w_{n-1} \le \ldots \le n w_1 w_2 \cdots w_{n-1} \\
    &\qquad\le \overline{n} w_1 w_2 \cdots w_{n-1} \le w_1 \overline{n} w_2 \cdots w_{n-1} \le \ldots \le w_1 w_2 \cdots w_{n-1} \overline{n}
\end{align*}
in window notation. In particular, it is linearly ordered by decreasing position of $n$ in complete notation. The following is a restatement of \autoref{lem:le_join_iff} in our setup:

\begin{lemma*}
    Let $u, w \in S^B_n$ and set $j' = \proj^S_I(u) \vee \proj^S_I(w)$. For $j \in (\proj^S_I)^{-1}(\{j'\})$, we have $u \vee w \le j$ if and only if $(T_L(u) \cup T_L(w)) \setminus T^B_{n-1} \subseteq T_L(j) \setminus T^B_{n-1}$.
\end{lemma*}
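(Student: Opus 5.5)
The plan is to read the statement as a specialization of \autoref{lem:le_join_iff} and check that the hypotheses match. We take $W = S^B_n$ with $I = \{s_i \mid 0 \le i \le n-2\}$, so that $W_I = S^B_{n-1}$. By \autoref{lem:le_join_iff}, for $j$ in the fiber over $j' = \proj^S_I(u) \vee \proj^S_I(w)$, we have $u \vee w \le j$ if and only if
\[
(\Phi_u \cup \Phi_w) \setminus \Phi_I^+ \subseteq \Phi_j \setminus \Phi_I^+ .
\]
It therefore remains only to translate from roots to reflections.

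First, the bijection $\alpha \mapsto s_\alpha$ from $\Phi^+$ to $T$ sends $\Phi_x$ onto $T_L(x)$ for every $x \in W$, as recorded at the start of the appendix. Being a bijection, it commutes with unions, intersections and set differences.

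Second, we must identify the image of $\Phi_I^+$ with $T^B_{n-1}$. The image of $\Phi_I^+$ is the set of reflections lying in $W_I$. To see this, note that $s_\alpha \in W_I$ with $\alpha \in \Phi^+$ forces $\alpha \in \Phi_I^+$, since $\Phi_{s_\alpha} \subseteq \Phi_I^+$ and $\alpha \in \Phi_{s_\alpha}$. By the remark in \autoref{subsec:parabolic}, for $s_\alpha \in W_I$ we have $\Phi_{s_\alpha} = (\Phi_I)_{s_\alpha} \subseteq \Phi_I^+$. Conversely, if $\alpha \in \Phi_I^+$ then $s_\alpha \in W_I$.

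The reflections of $S^B_n$ lying in $S^B_{n-1}$ are exactly those fixing $n$. From the explicit list of $T^B_n$, these are precisely the elements of $T^B_{n-1}$.

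Applying the bijection to both sides of the displayed inclusion then gives exactly the stated condition. The only step requiring any care is this identification of $\{s_\alpha \mid \alpha \in \Phi_I^+\}$ with $T \cap W_I = T^B_{n-1}$, and even that is routine.
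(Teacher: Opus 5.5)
Your proposal is correct and is exactly the paper's approach: the paper gives no separate proof and presents this lemma as a direct restatement of \autoref{lem:le_join_iff} for $W = S^B_n$, using the correspondence $\Phi_x \leftrightarrow T_L(x)$ recorded at the start of the appendix. Your check that $\{s_\alpha \mid \alpha \in \Phi_I^+\} = T \cap W_I = T^B_{n-1}$ makes explicit the identification the paper leaves implicit.
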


The full description of the type $B$ algorithm can be found in \cite{biagioli2026computing}. We omit its rederivation here, which is very similar to the type $A$ case. We leave it as an exercise for the interested reader.

\subsection*{Type D}\label{subsec:algorithm_type_d}

Finally, we work out the algorithm for type $D$.

For $n \ge 2$, let $S^D_n$ denote the even signed permutation group on $[n]$, which is a Coxeter group with generators $S = \{s_i \mid 0 \le i \le n-1 \}$, where $s_0 = (\overline{2} \,\, 1) (\overline{1} \,\, 2)$, and for $1 \le i \le n-1$, $s_i = (\overline{i+1} \,\, \overline{i}) (i \,\, i+1)$. We refer the reader to \cite[Section 8.2]{bjorner2005combinatorics} for more details. The subset of reflections in $S^D_n$ is
\begin{equation*}
    T^D_n = \{(\overline{j} \,\, \overline{i})(i \,\, j) \in S^D_n \mid 1 \le \abs{i} < j \le n \}.
\end{equation*}
Set $I = \{s_i \mid 0 \le i \le n-2\}$, so that for $n \ge 3$, $(S^D_n)_I = S^D_{n-1}$.

Now suppose $n \ge 3$.
\begin{lemma*}
    For $w \in S^D_n$, $\proj^S_I(w) \in S^D_{n-1}$ is the even signed permutation obtained by removing $n$ and $\overline{n}$ from the complete notation of $w$, then flipping the signs of the two middle elements if $w^{-1}(n) < 0$.
\end{lemma*}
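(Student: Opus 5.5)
Following the type $A$ and $B$ cases, let $w' \in S^D_{n-1}$ be the even signed permutation described in the statement. By \autoref{lem:Phi_projection}, together with the correspondence $T_L(w) = \{s_\alpha \mid \alpha \in \Phi_w\}$, it suffices to prove that $T_L(w') = T_L(w) \cap T^D_{n-1}$. I will first confirm that $w'$ really lies in $S^D_{n-1}$. In complete notation $w$ is the sequence $w(\overline{n}),\ldots,w(\overline{1}),w(1),\ldots,w(n)$, which is antisymmetric about the center. Removing the two entries $n$ and $\overline{n}$ leaves an antisymmetric sequence of length $2n-2$, so it is the complete notation of a signed permutation $v$ of $[n-1]$. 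The parity of the number of negative entries of $v$ changes exactly when $w^{-1}(n)<0$. This is because in that case $\overline{n}$ sat in the right half and is removed, while an entry from the left half shifts into the right half. Flipping the signs of the two middle entries $v(\overline{1}), v(1)$ swaps them and changes the parity once more, so $w' \in S^D_{n-1}$ in every case.

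Next I will use the type $D$ description of left inversions. For $1 \le \abs{i} < j$, the reflection $(\overline{j}\,\,\overline{i})(i\,\,j)$ lies in $T_L(w)$ exactly when the values $i$ and $j$ appear in inverted order in the complete notation of $w$, that is, when $w^{-1}(i) > w^{-1}(j)$ in the order $\overline{n}<\cdots<\overline{1}<1<\cdots<n$ on positions. This is the left-inversion analogue of \cite[Section 8.2]{bjorner2005combinatorics}, and I will check it against the sign conventions for the generators $s_i$ fixed above. A reflection with $j \le n-1$ only involves values other than $\pm n$. Deleting $\pm n$ keeps the relative positions of all other values, so $v$ and $w$ have the same such inversions. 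It then remains to compare $v$ with $w'$.

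The two middle entries are $v(\overline{1}) = -v(1)$ and $v(1)$, and the sign flip swaps them. This changes only relative orders involving the pair $\{\pm a\}$, where $a = \abs{v(1)}$. The key observation is that in type $D$ the pair $\{a,\overline{a}\}$ does not by itself give a reflection, since there is no reflection $(\overline{a}\,\,a)$. For any other value $b$, the relative position of $b$ with respect to $a$ and to $\overline{a}$ is unchanged, because $a$ and $\overline{a}$ occupy the two adjacent middle positions and $b$ lies outside them. The one subtlety is that the pair $(\pm b, a)$ and the pair $(\pm b, \overline{a})$ define different reflections; I will check that the swap maps the comparison for each reflection to itself. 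Since the swap is a transposition of adjacent positions that exchanges $a$ with $\overline{a}$, each reflection's test pair is just relabeled consistently. Hence $T_L(w') = T_L(v) = T_L(w) \cap T^D_{n-1}$.

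I expect the main obstacle to be the bookkeeping in this last step. One must verify that each reflection $(\overline{j}\,\,\overline{i})(i\,\,j)$ is detected equally well by comparing the pair $(i,j)$ or the mirror pair $(\overline{j},\overline{i})$, and that the swap of the middle entries, which interchanges $a$ and $\overline{a}$, preserves the inverted/non-inverted status of all pairs not equal to $\{a,\overline{a}\}$. I would handle this by checking small cases such as $n=3$ directly before writing the general argument.
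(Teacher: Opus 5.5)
Your proposal is correct and takes the same route as the paper: you reduce via \autoref{lem:Phi_projection} to $T_L(w') = T_L(w)\cap T^D_{n-1}$, which the paper simply calls clear, and your complete-notation argument supplies the missing details. Deleting $\pm n$ preserves relative order, and swapping the two adjacent middle entries reverses only the pair $\{a,\overline{a}\}$, which gives no type $D$ reflection, so the ``relabeling'' subtlety you worry about never arises. One small correction to your parity step: when $w^{-1}(n)<0$ no entry crosses the center, because exactly one entry is removed from each half, and the window's parity changes simply because the negative entry $\overline{n}$ leaves it.
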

\begin{proof}
    Let $w' \in S^B_{n-1}$ be the signed permutation obtained by the above procedure. By construction, the sign flipping ensures that $w' \in S^D_{n-1}$. By \autoref{lem:Phi_projection}, it suffices to show that $T_L(w') = T_L(w) \cap T^D_{n-1}$, which is clear.
\end{proof}

The fibers of $\proj^S_I$ are no longer chains, but are still easy to describe. For $w \in S^D_{n-1}$, the fiber $(\proj^S_I)^{-1}(\{w\}) = w \leftindex^I (S^D_n)$ consists of the $2n$ elements
\begin{align*}
    &w_1 w_2 \cdots w_{n-1} n \le w_1 w_2 \cdots w_{n-2} n w_{n-1} \le \ldots \le n w_1 w_2 \cdots w_{n-1}, \overline{n} \overline{w_1} w_2 \cdots w_{n-1} \\
    &\qquad \le \overline{w_1} \overline{n} w_2 \cdots w_{n-1} \le \ldots \le \overline{w_1} w_2 \cdots w_{n-1} \overline{n}
\end{align*}
in window notation. Note that the two elements $n w_1 w_2 \cdots w_{n-1}$ and $\overline{n} \overline{w_1} w_2 \cdots w_{n-1}$ are incomparable. The weak order on the fiber can be extended to a linear ordering by decreasing position of $n$ in complete notation. The following is a restatement of \autoref{lem:le_join_iff} in our setup:

\begin{lemma*}
    Let $u, w \in S^D_n$ and set $j' = \proj^S_I(u) \vee \proj^S_I(w)$. For $j \in (\proj^S_I)^{-1}(\{j'\})$, we have $u \vee w \le j$ if and only if $(T_L(u) \cup T_L(w)) \setminus T^D_{n-1} \subseteq T_L(j) \setminus T^D_{n-1}$.
\end{lemma*}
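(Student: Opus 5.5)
This statement is the type $D$ specialization of \autoref{lem:le_join_iff}, so the plan is to translate that lemma's hypotheses and conclusion into the present notation rather than prove anything new. The set $\Phi^+$ corresponds to $T^D_n$ via $\alpha \mapsto s_\alpha$, and under this bijection $\Phi_v$ corresponds to $T_L(v)$ for every $v \in S^D_n$, as recorded at the start of the appendix.

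The first substantive point is that the parabolic data match. For $n \ge 3$ we have $(S^D_n)_I = S^D_{n-1}$. The reflections lying in this parabolic subgroup are exactly $T^D_{n-1}$, that is, the reflections $(\overline{j}\,\,\overline{i})(i\,\,j)$ with $1 \le \abs{i} < j \le n-1$. On the root side, $\Phi_I^+$ corresponds to $T \cap W_I$. This holds because a reflection $s_\alpha$ lies in $W_I$ exactly when $\alpha \in \Phi_I$, a standard fact about parabolic subgroups. The paper already uses the equivalent statement $T_L(w') = T_L(w) \cap T^D_{n-1}$ in the projection lemma above. Concretely, $\Delta_I$ spans the coordinates $e_1, \ldots, e_{n-1}$, and the positive roots $e_j \pm e_i$ with $j \le n-1$ are exactly those whose reflections avoid the letter $n$.

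With this dictionary, the identities $(\Phi_u \cup \Phi_w) \setminus \Phi_I^+ \mapsto (T_L(u) \cup T_L(w)) \setminus T^D_{n-1}$ and $\Phi_j \setminus \Phi_I^+ \mapsto T_L(j) \setminus T^D_{n-1}$ follow because the bijection commutes with unions and set differences. Likewise $\proj^S_I$ in \autoref{lem:le_join_iff} is the map described in the preceding lemma. The statement then follows verbatim from \autoref{lem:le_join_iff}, applied with $W = S^D_n$ and $I = \{s_0, \ldots, s_{n-2}\}$.

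The only step that needs any care is identifying $T \cap W_I$ with $T^D_{n-1}$. This is the one place where type $D$ differs from types $A$ and $B$, because of the generator $s_0 = (\overline{2}\,\,1)(\overline{1}\,\,2)$. Even so, the identification is a routine check on the explicit roots $e_j \pm e_i$.
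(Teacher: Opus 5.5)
Your proposal is correct and matches the paper, which gives no separate argument and simply presents this lemma as a restatement of \autoref{lem:le_join_iff} under the dictionary $\Phi_v \leftrightarrow T_L(v)$ and $\Phi_I^+ \leftrightarrow T^D_{n-1}$ (for $n \ge 3$). Your explicit check is the verification the paper leaves implicit: $\Delta_I$ spans $e_1, \ldots, e_{n-1}$, so $\{s_\alpha \mid \alpha \in \Phi_I^+\} = T^D_{n-1}$.
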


The above condition says that if $k \in \pm [n-1]$ occurs to the right of $n$ in the complete notation in $u$ or $w$, then it also occurs to the right of $n$ in $j$. Thus to obtain the join $j = u \vee w$, we insert $n$ into $j' = \proj^S_I(u) \vee \proj^S_I(w)$ at the rightmost possible position such that this condition holds. If we cannot insert $n$ in the right half of $j'$, then we also flip the signs of the middle two elements of $j'$ before trying to insert $n$ from right to left in the left half of this modified $j'$. To avoid degeneracies, our base case here will be $n=2$, for which we can compute joins directly.

\begin{example}
    Let $u = \typeb{-2,5,4,-3,1}$ and $w = \typeb{3,1,4,5,2}$. Set $u_5 = u, w_5 = w$, and compute recursively $u_i = \proj^{\{s_0, \ldots, s_i\}}_{\{s_0, \ldots, s_{i-1}\}} u_{i+1}$ and $w_i = \proj^{\{s_0, \ldots, s_i\}}_{\{s_0, \ldots, s_{i-1}\}} w_{i+1}$ for $4 \ge i \ge 2$. We then directly calculate $u_2 \vee w_2$. Having calculated $u_i \vee w_i$, we then use the above procedure to insert $i+1$ into $u_i \vee w_i$. The following table shows the intermediate calculations:
    \begin{center}
    \begin{tabular}{|C|C|C|C|}
        \hline
        i & u_i & w_i & u_i \vee w_i \\
        \hline
        5 & \typeb{-2,5,4,-3,1} & \typeb{3,1,4,5,2} & \typeb{-5,4,2,-3,1} \\
        4 & \typeb{-2,4,-3,1} & \typeb{3,1,4,2} & \typeb{-4,2,-3,1} \\
        3 & \typeb{-2,-3,1} & \typeb{3,1,2} & \typeb{-2,-3,1} \\
        2 & \typeb{2,1} & \typeb{1,2} & \typeb{2,1} \\
        \hline
    \end{tabular}
    \end{center}
    For example, to get $u_2$ from $u_3$, we first delete $3$ from $u_3$. Since $3$ is in the left half of $u_3$, we also need to flip the sign of the middle elements $\pm 2$.

    To get $u_3 \vee w_3$ from $j_2 = u_2 \vee w_2$, we need to insert $3$ in a suitable position. From $u_3$ and $w_3$, we see that $3$ has to be inserted into $j_2$ to the left of $\overline{2}, 1, 2$, hence we must insert $3$ into the left half of $j_2$, so the sign of the middle element $2$ has to be flipped first, to obtain $\typeb{-2,1}$. We insert $3$ into the left side of this, trying from right to left, to get $\typeb{-2,-3,1}$.
\end{example}

\printbibliography

\end{document}